\theoremstyle{plain}
\newtheorem{thm}{Theorem}
\newtheorem{cor}{Corollary}
\newtheorem{lem}[cor]{Lemma}
\newtheorem{prop}[cor]{Proposition}
\theoremstyle{definition}
\newtheorem{definition}[cor]{Definition}
\newtheorem{remark}[cor]{Remark}
\numberwithin{cor}{section}
\numberwithin{equation}{section}
\DeclareMathOperator{\tr}{tr}
\DeclareMathOperator{\dist}{dist}
\DeclareMathOperator*{\osc}{osc}
\DeclareMathOperator*{\argmin}{argmin}
\DeclareMathOperator*{\esssup}{ess\,sup}
\DeclareMathOperator*{\essinf}{ess\,inf}
\DeclareMathOperator{\USC}{USC}
\DeclareMathOperator{\LSC}{LSC}
\DeclareMathOperator{\BUC}{BUC}
\DeclareMathOperator{\Var}{Var}
\DeclareMathOperator{\divg}{div}
\renewcommand{\d}{d} 
\newcommand{\m}{m} 
\newcommand{\ep}{\varepsilon}
\newcommand{\R}{\ensuremath{\mathds{R}}}
\newcommand{\Q}{\ensuremath{\mathds{Q}}}
\newcommand{\Rd}{\ensuremath{{\mathds{R}^\d}}}
\newcommand{\M}{\ensuremath{\mathcal{M}}}
\newcommand{\Sy}{\ensuremath{\mathcal{S}^\d}}
\newcommand{\Prob}{\ensuremath{\mathds{P}}}
\newcommand{\E}{\ensuremath{\mathds{E}}}
\newcommand{\Ls}{\ensuremath{\mathcal{L}}}
\newcommand{\iden}{I_\d}
\newcommand{\indx}{\ensuremath{\{1,\ldots,\m \}}}
\begin{document}

\title[Concentration phenomena for neutronic multigroup diffusion]{Concentration phenomena for neutronic multigroup diffusion in random environments}

\author[S. N. Armstrong]{Scott N. Armstrong}
\address{Department of Mathematics\\ University of Wisconsin, Madison\\ 480 Lincoln Drive\\
Madison, Wisconsin 53706.}
\email{armstron@math.wisc.edu}
\author[P. E. Souganidis]{Panagiotis E. Souganidis}
\address{Department of Mathematics\\ The University of Chicago\\ 5734 S. University Avenue
Chicago, Illinois 60637.}
\email{souganidis@math.uchicago.edu}

\date{\today}
\keywords{multigroup diffusion model, stochastic homogenization, viscous Hamilton-Jacobi system}
\subjclass[2010]{82D75, 35B27}

\begin{abstract}
We study the asymptotic behavior of the principal eigenvalue of a weakly coupled, cooperative linear elliptic system in a stationary ergodic heterogeneous medium. The system arises as the so-called \emph{multigroup diffusion model} for neutron flux in nuclear reactor cores, the principal eigenvalue determining the criticality of the reactor in a stationary state. Such systems have been well-studied in recent years in the periodic setting, and the purpose of this work is to obtain results in random media. Our approach connects the linear eigenvalue problem to a system of quasilinear viscous Hamilton-Jacobi equations. By homogenizing the latter, we characterize the asymptotic behavior of the eigenvalue of the linear problem and exhibit some concentration behavior of the eigenfunctions.
\end{abstract}

\maketitle

\section{Introduction}
\label{IN}

We study the behavior, as $\ep \to 0$, of the principal eigenvalue and eigenfunction of the weakly-coupled, cooperative elliptic system
\begin{multline} \label{introeq}
-\ep^2 \tr \left( A_\alpha\left(\frac x\ep ,\omega \right) D^2\varphi_\alpha^\ep \right) + \ep b_\alpha\left(\frac x\ep,\omega\right) \cdot D\varphi_\alpha^\ep + \sum_{\beta=1}^\m c_{\alpha\beta}\left(\frac x\ep, \omega\right) \varphi_\beta^\ep \\ = \lambda_\ep \sum_{\beta=1}^\m \sigma_{\alpha\beta} \left(\frac x\ep, \omega\right) \varphi^\ep_\beta\quad \mbox{in} \ U \quad  (\alpha=1,\ldots, \m),
\end{multline}
subject to the conditions
\begin{equation} \label{introbc}
\varphi_\alpha^\ep > 0 \quad\mbox{in} \ U \quad \mbox{and} \quad \varphi_\alpha^\ep = 0 \quad \mbox{on} \ \partial U \quad (\alpha = 1, \ldots, \m).
\end{equation}
Here $\m \geq 1$ is a positive integer and $U\subseteq \Rd$ is a bounded domain. The unknowns are the eigenvalue $\lambda_\ep = \lambda_\ep(\omega,U)$ and the eigenfunctions $\left(\varphi^\ep_\alpha(\cdot,\omega)\right)_{1 \leq \alpha \leq \m}$. The underlying \emph{random environment} is described by a probability space $(\Omega, \mathcal F, \mathds P)$, and the coefficients $A_\alpha$, $b_\alpha$, $c_{\alpha\beta}$ and $\sigma_{\alpha\beta}$ are functions on $\Rd \times \Omega$ which are required to be \emph{stationary} and \emph{ergodic}. (Precise hypotheses are found in Section~\ref{prelim} below.)

The expectation is that large amplitude, high-frequency oscillations persist as $\ep \to 0$, and the goal is to describe these oscillations.

\medskip

Problem \eqref{introeq} has been proposed and extensively studied in periodic media by physicists as a simplified model for the neutron flux in nuclear reactor cores, see \cite{D,L1,L2,LW,WW}. The modeling assumption is that neutrons are moving in the reactor core (the domain $U$) in $\m$ distinct energy groups, each group consisting of neutrons with a similar amount of kinetic energy. The function $\varphi^\ep_\alpha$ is  the steady-state distribution of neutrons in the $\alpha$th energy group inside the core. The matrix $A_\alpha$ describes the \emph{diffusion} of the neutrons in the $\alpha$th group, the vector $b_\alpha$ is the \emph{drift}, $c_{\alpha\beta}$ is the \emph{total cross section}, which represents the interaction of neutrons in various energy groups, and $\sigma_{\alpha\beta}$ models the creation of neutrons by nuclear fission. The factors $\ep^2$ and $\ep$ appear in front of the diffusion and drift terms, respectively, due to a physical assumption that the order of the diffusion and drift should be the same as that of the microscopic lattice. The principal eigenvalue $\lambda_\ep$, in particular whether $\lambda_\ep$ is greater or less than $1$, determines the \emph{criticality} of the reactor. Hence characterizing the asymptotic behavior of $\lambda_\ep$ is of particular importance. We remark that while the model is typically written in divergence form, if the matrices $A_\alpha$ are sufficiently regular, it can be recast in the form of \eqref{introeq}.

\medskip

A very complete mathematical analysis of \eqref{introeq}-\eqref{introbc} in the case of periodic coefficients was performed by Capdeboscq \cite{C2} (see also \cite{P,C1,AC,ACPS,AB1,AB2,AM,PS,APP}). It was shown in~\cite{C2} that the eigenvalue $\lambda_\ep$ admits the expansion
\begin{equation} \label{lamexp}
\lambda_\ep = \overline \lambda + \ep^2 \mu + o(\ep^2) \quad \mbox{as} \ \ep \to 0,
\end{equation}
and the eigenfunctions can be factored as
\begin{equation} \label{phiexp}
\varphi^\ep_\alpha = \overline \psi_\alpha\!\left( \frac x \ep \right) \exp\!\left( - \overline \theta \cdot \frac x\ep \right) \left( u(x) + o(1) \right) \quad \mbox{as} \ \ep \to 0.
\end{equation}
Here $\overline \lambda\in \R$, $\overline \theta \in \Rd$, and the periodic function $\overline \psi_\alpha = \overline \psi_\alpha(y)$ are identified via an optimization of a periodic (cell) eigenvalue problem, while $(\mu,u)$ is the solution of an effective ``recentered" principal eigenvalue problem in the macroscopic domain $U$. Observe that the oscillations of the coefficients on the microscopic scale $\ep$ not only induce oscillations in the solution on a scale of $\ep$, but also produce a large macroscopic effect, namely an exponential drift. 

\medskip

The random setting is different. As we will see later, we cannot expect \eqref{lamexp} and \eqref{phiexp} to hold in full. Moreover, the approach of \cite{C2} does not seem to yield itself to the analysis of \eqref{introeq} in random environments. Instead, we present an alternative approach. The classical Hopf-Cole transformation converts \eqref{introeq} into a quasilinear (viscous) Hamilton-Jacobi system, and we observe that this nonlinear system may be analyzed by the methods recently introduced by Lions and Souganidis \cite{LS3} and developed further by the authors~\cite{AS}. Our main result is the assertion that there exists a deterministic constant $\overline \lambda$ such that, as $\ep \to 0$, $\lambda_\ep \rightarrow \overline \lambda$ almost surely in $\omega$, together with a characterization of $\overline \lambda$. In fact,  $\overline \lambda$ is identified in terms of a convex \emph{effective Hamiltonian} $\overline H$. Furthermore, we exhibit concentration behavior for the eigenfunctions $\varphi^\ep_\alpha$, showing that, under some additional hypotheses on the random environment, we have, as $\ep \to 0$,
\begin{equation*}
-\ep \log \varphi^\ep_\alpha \rightarrow \overline \theta\cdot x \quad \mbox{locally uniformly in} \ U \ \ \mbox{and a.s. in} \ \omega,
\end{equation*}
where $\overline \theta := \argmin_{\Rd} \overline H$. We thereby justify, in random environments, the leading term in \eqref{lamexp} and, under stronger assumptions, in \eqref{phiexp}.

\medskip

The article is organized as follows. In Section~\ref{prelim} we state the assumptions and some preliminary results needed in the sequel. The main results are presented in Section~\ref{MR}. In Section~\ref{H} we prepare the homogenization of the transformed nonlinear system by studying an auxiliary ``cell" problem. In Section~\ref{Hbar}, we define $\overline H$ and do most of the work for the proof of Theorem~\ref{HJthm}, which is given in Section~\ref{SHJthm}. This analysis is applied to the linear system in Section~\ref{proof}, where we prove Theorem~\ref{MAIN}. In Section~\ref{SC}, we show that $\overline H$ is strictly convex in $p$ (and therefore the eigenfunctions concentrate) in uniquely ergodic environments.

\section{Preliminaries} \label{prelim}

\subsection{Notation}

The symbols $C$ and $c$ denote positive constants, which may vary from line to line and, unless otherwise indicated, do not depend on $\omega$. We work in the $\d$-dimensional Euclidean space $\Rd$ with $\d \geq 1$, and we write $\R_+:=(0,\infty)$. The set of rational numbers is denoted by $\Q$. The set of $n$-by-$\d$ matrices is denoted by $\M^{n\times \d}$, and $\Sy \subseteq \M^{\d\times \d}$ is the set of $\d$-by-$\d$ symmetric matrices. If $v,w\in \Rd$, then $v\otimes w \in \Sy$ is the symmetric tensor product which is the matrix with entries $\frac12(v_iw_j+ v_jw_i)$. For $y \in \Rd$, we denote the Euclidean norm of $y$ by $|y|$, while if $M\in \M^{n\times \d}$, $M^t$ is the transpose of $M$. If $M\in\M^{\d\times\d}$, then $\tr(M)$ is the trace of $M$, and we write $|M| := \tr(M^t M)^{1/2}$. The identity matrix is $\iden$. If $U \subseteq\Rd$, then $|U|$ is the Lebesgue measure of $U$. Open balls are written $B(y,r): = \{ x\in \Rd : |x-y| < r\}$, and we set $B_r : = B(0,r)$. The distance between two subsets $U,V\subseteq \Rd$ is denoted by $\dist(U,V) = \inf\{ |x-y|: x\in U, \, y\in V\}$. If $U\subseteq \Rd$ is open, then $\USC(U)$, $\LSC(U)$ and $\BUC(U)$ are respectively the sets of upper semicontinuous, lower semicontinuous and bounded and uniformly continuous functions $U\to \R$. If $f:U\to \R$ is integrable, then we use the notation
\begin{equation*}
\fint_U f \, dy = \frac{1}{|U|} \int_U f \, dy.
\end{equation*}
If $f:U \to \R$ is measurable, then we set $\osc_U f:= \esssup_U f - \essinf_U f$. The Borel $\sigma$-field on $\Rd$ is denoted by $\mathcal{B}(\Rd)$. If $s,t\in \R$, we write $s\wedge t : = \min\{ s,t\}$.

We emphasize that, throughout the paper, all differential inequalities involving functions not known to be smooth are assumed to be satisfied in the viscosity sense. Finally, we abbreviate the phrase \emph{almost surely in} $\omega$ by ``a.s. in $\omega$."

\subsection{The random medium}
The random environment is described by a probability space $(\Omega, \mathcal F, \mathds P)$, and a particular ``medium" is an element $\omega \in\Omega$. We endow the probability space with an ergodic group $(\tau_y)_{y\in \Rd}$ of $\mathcal F$-measurable, measure-preserving transformations $\tau_y:\Omega\to \Omega$. Here \emph{ergodic} means that, if $D\subseteq \Omega$ is such that $\tau_z(D) = D$ for every $z\in \Rd$, then either $\Prob[D] = 0$ or $\Prob[D] = 1$. An $\mathcal F$-measurable function $f$ on $\Rd \times \Omega$ is said to be \emph{stationary} if the law of $f(y,\cdot)$ is independent of $y$. This is quantified in terms of $\tau$ by the requirement that
\begin{equation*}
f(y,\tau_z \omega) = f(y+z,\omega) \quad \mbox{for every} \ y,z\in \Rd.
\end{equation*}
Notice that if $\phi:\Omega \to S$ is a random process, then $\tilde \phi(y,\omega) : = \phi(\tau_y\omega)$ is stationary. Conversely, if $f$ is a stationary function on $\Rd \times \Omega$, then $f(y,\omega) = f(0,\tau_y\omega)$. 

The expectation of a random variable $f$ with respect to $\mathds P$ is written $\E f$, and we denote the variance of $f$ by $\Var(f): = \E(f^2) - (\E f)^2$. If $E \in \mathcal F$, then $\mathds{1}_E$ is the indicator random variable for $E$; i.e., $\mathds{1}_E(\omega) = 1$ if $\omega\in E$, and $\mathds{1}_E(\omega) = 0$ otherwise. 

\medskip

We rely on the following multiparameter ergodic theorem, a proof of which can be found in Becker~\cite{Be}.
\begin{prop} \label{ergthm}
Suppose that $f:\Rd \times \Omega \to \R$ is stationary and $\E |f(0,\cdot)| < \infty$. Then there is a subset $\widetilde \Omega \subseteq \Omega$ of full probability such that, for each bounded domain $V \subseteq \Rd$ and $\omega \in \widetilde \Omega$, 
\begin{equation} \label{erglim}
\lim_{t\to \infty} \fint_{tV} f(y,\omega) \, dy = \E f.
\end{equation}
\end{prop}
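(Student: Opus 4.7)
The plan is to reduce Proposition~\ref{ergthm} to the classical multidimensional (Wiener) ergodic theorem for axis-aligned rectangles, and then extend to a general bounded domain by sandwiching it between inner and outer approximations made of finite unions of rational boxes. The main care is to select a single full-measure subset $\widetilde\Omega\subseteq\Omega$ that works for every bounded $V$ simultaneously.

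Step one is to establish \eqref{erglim} when $V$ is an axis-aligned box. This is the continuous analogue of Wiener's multiparameter ergodic theorem, and the standard route is to iterate Birkhoff's one-parameter theorem along the $\d$ coordinate directions. Although each one-parameter subgroup $\{\tau_{te_i}\}_{t\in\R}$ need not itself be ergodic, Birkhoff gives a.s. convergence of the partial average to the conditional expectation on the corresponding invariant $\sigma$-algebra; after $\d$ iterations, the limit is the conditional expectation on the joint invariant $\sigma$-algebra of $(\tau_y)_{y\in\Rd}$, which by ergodicity reduces to $\E f$. Let $\mathcal R$ denote the countable collection of axis-aligned rational boxes in $\Rd$, and for each $R\in\mathcal R$ let $\Omega_R,\Omega_R'\in\mathcal F$ be the full-measure sets on which \eqref{erglim} holds with $V=R$, for $f$ and $|f|$ respectively. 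Define
\begin{equation*}
\widetilde\Omega := \bigcap_{R \in \mathcal R}\bigl(\Omega_R \cap \Omega_R'\bigr),
\end{equation*}
which still has full probability by countable additivity. By additivity of the integral, \eqref{erglim} holds on $\widetilde\Omega$ for every finite disjoint union of boxes in $\mathcal R$, for both $f$ and $|f|$.

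Step two is the approximation. Given a bounded domain $V\subseteq\Rd$ and $\delta>0$, choose finite disjoint unions $V^\pm$ of boxes in $\mathcal R$ with $V^-\subseteq V\subseteq V^+$ and $|V^+\setminus V^-|<\delta|V|$. For $\omega\in\widetilde\Omega$, the triangle inequality together with the inclusion $V\setminus V^-\subseteq V^+\setminus V^-$ yields
\begin{equation*}
\biggl|\int_{tV} f(y,\omega)\,dy - \int_{tV^-} f(y,\omega)\,dy\biggr| \leq \int_{t(V^+\setminus V^-)} |f(y,\omega)|\,dy.
\end{equation*}
Dividing by $|tV|=t^{\d}|V|$, sending $t\to\infty$, and using the already-established conclusion on $V^-$ and $V^+\setminus V^-$ gives
\begin{equation*}
\limsup_{t\to\infty}\biggl| \fint_{tV} f(y,\omega)\,dy - \E f\biggr| \leq \delta\bigl(\E|f|+|\E f|\bigr),
\end{equation*}
after which letting $\delta\to 0$ closes the argument.

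The main obstacle is the uniformity of $\widetilde\Omega$ in $V$: the argument forces one to first pin down convergence on the countable subclass $\mathcal R$, and to run the ergodic theorem in parallel for $|f|$, so that the boundary-layer error is controlled for every bounded domain using the \emph{same} null set. The remaining subtlety, namely carrying conditional expectations through the iteration of Birkhoff's theorem when the individual coordinate subgroups are not assumed ergodic, is a standard feature of the multiparameter theory.
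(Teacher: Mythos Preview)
The paper does not prove this proposition; it simply quotes it and cites Becker~\cite{Be}. So there is no ``paper's proof'' to compare against, and your sketch is a reasonable self-contained argument along classical lines.

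Your outline is essentially correct and is the standard route: Wiener's multiparameter theorem on boxes (via iterated Birkhoff plus the maximal inequality), a countable intersection to fix $\widetilde\Omega$, then geometric approximation. One genuine gap: the outer approximation $V^+$ need not exist with $|V^+\setminus V^-|<\delta|V|$. If $|\partial V|>0$ (which can happen for a bounded domain), any finite union of boxes containing $V$ has measure at least $|\bar V|>|V|$, so $|V^+\setminus V^-|$ is bounded below by $|\partial V|$ no matter how fine the grid. Your sandwich inequality then does not close.

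The fix is easy and worth stating. Use only the \emph{inner} approximation $V^-\subseteq V$ and control the remainder differently: for $f\in L^\infty$ one has directly
\[
\frac{1}{|tV|}\int_{t(V\setminus V^-)}|f(y,\omega)|\,dy \leq \|f\|_{L^\infty}\,\frac{|V\setminus V^-|}{|V|}<\|f\|_{L^\infty}\,\delta,
\]
so the argument goes through for bounded $f$; then pass to general $f\in L^1$ by truncation, using the maximal ergodic inequality (over cubes) to control the tail uniformly on $\widetilde\Omega$. Alternatively, since every domain appearing in the paper has null boundary, you could simply add the hypothesis $|\partial V|=0$ and keep your argument as is.
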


\subsection{Assumptions}
The following hypotheses are in force throughout this article. The coefficients 
\begin{gather*}
A_\alpha : \Rd \times \Omega \to \Sy, \quad b_\alpha : \Rd \times \Omega \to \Rd \quad \mbox{and}  \quad c_{\alpha\beta}, \ \sigma_{\alpha\beta} : \Rd \times \Omega \to \R.
\end{gather*}
are measurable and we require that 
\begin{equation} \label{stationary}
A_\alpha,\ b_\alpha, \ c_{\alpha\beta}, \ \mbox{and} \ \  \sigma_{\alpha\beta} \quad \mbox{are stationary for each} \ \ 1\leq \alpha,\beta\leq \m.
\end{equation}
We assume that, for each $\omega\in \Omega$ and $1\leq \alpha\leq \m$,
\begin{equation} \label{AC11}
A_\alpha(\cdot, \omega) \in C^{1,1}_{\mathrm{loc}}(\Rd;\Sy)
\end{equation}
and that $A_\alpha$ has the form $A  = \Sigma_\alpha \Sigma_\alpha^t$, where, for some $C> 0$,
\begin{equation}\label{Alip}
\Sigma_\alpha(y, \omega) \in \M^{\d \times m} \quad \mbox{satisfies} \quad \| \Sigma_\alpha(\cdot,\omega) \|_{C^{0,1}(\Rd)} \leq C.
\end{equation}
We assume that there exists $C> 0$ such that, for all $\omega \in \Omega$ and $1\leq \alpha,\beta \leq \m$,
\begin{equation} \label{bounds}
\| A_\alpha(\cdot,\omega)\|_{C^{0,1}(\Rd)} + \| b_\alpha(\cdot,\omega) \|_{C^{0,1}(\Rd)}  + \| \sigma_{\alpha\beta} (\cdot,\omega) \|_{C^{0,1}(\Rd)} + \| c_{\alpha\beta} (\cdot,\omega) \|_{C^{0,1}(\Rd)} \leq C.
\end{equation}
The matrices $A_\alpha$ are uniformly positive definite in the sense that there exist positive constants $0 < \lambda \leq \Lambda$ such that, for every $y,\xi \in  \Rd$, $\omega\in \Omega$ and index $\alpha$,
\begin{equation}\label{ellip}
\lambda|\xi|^2 \leq A_\alpha(y,\omega) \xi \cdot \xi \leq \Lambda |\xi|^2.
\end{equation}
The matrix $c_{\alpha\beta}$ is \emph{diagonally dominant}, i.e.,
\begin{equation} \label{diagdom}
c_{\alpha\beta} \leq 0 \quad \mbox{for} \ \alpha\neq \beta, \quad \mbox{and} \quad \sum_{\beta=1}^\m c_{\alpha\beta} \geq 0,
\end{equation}
as well as \emph{fully coupled} in the sense that there exists $c> 0$ such that
\begin{equation} \label{coupled}
\left\{ \begin{aligned}
& \mbox{if} \  \{ \mathcal I, \mathcal J \} \ \mbox{is a nontrivial partition of} \ \indx, \ \mbox{then for every} \ y\in \Rd \ \mbox{and} \ \omega\in \Omega,    \\
& \mbox{there exists} \ \alpha\in \mathcal I \  \mbox{and} \  \beta\in \mathcal J \ \mbox{such that} \ c_{\alpha\beta}(y,\omega) \leq -c.
\end{aligned} \right.
\end{equation}
The hypothesis \eqref{coupled} is satisfied, for example, if $c_{\alpha,\alpha+1} \leq -c < 0$ for each $\alpha\in \indx$. Finally, we suppose that, for every $y\in  \Rd$, $\omega\in \Omega$, and $\alpha,\beta=1,\ldots,\m$,
\begin{equation} \label{sigma}
\sigma_{\alpha\beta} \geq 0 \quad \mbox{and} \quad \sum_{\gamma=1}^k \sigma_{\alpha\gamma} \geq c > 0.
\end{equation}
We emphasize that \eqref{stationary}-\eqref{sigma} are assumed to hold throughout this article.

\subsection{Further notation}
It is convenient to write the system \eqref{introeq} in a more compact form. For each $\alpha \in \indx$, let $\Ls_\alpha$ denote the linear elliptic operator which acts on a test function $\varphi$ by
\begin{equation*}
\Ls_\alpha \varphi : = -\tr\! \left( A_\alpha(y ,\omega) D^2\varphi \right) + b_\alpha\left(y,\omega\right) \cdot D\varphi,
\end{equation*}
and $\Ls = ( \Ls_1,\ldots,\Ls_\m)$ act on $\Phi = (\varphi_1,\ldots,\varphi_\m)$ by
\begin{equation*}
\Ls \Phi : = \left( \Ls_1\varphi_1,\ldots , \Ls_\m \varphi_\m\right).
\end{equation*}
The operator corresponding to the microscopic scale of order $\ep$ is denoted by
\begin{equation} \label{scaling}
\left(\Ls^\ep \Phi\right)\!(x) : = \left( \Ls \Psi\right)\!\left(\frac x\ep \right),
\end{equation}
where $\Psi(x) : = \Phi(\ep x)$. Hence we may write $\Ls^\ep = \left( \Ls^\ep_1,\ldots, \Ls^\ep_\m\right)$ where 
\begin{equation*}
\Ls^\ep_\alpha \varphi = -\ep^2\tr\! \left( A_\alpha\left(\frac x\ep ,\omega \right) D^2\varphi \right) + \ep b_\alpha\left(\frac x\ep,\omega\right) \cdot D\varphi.
\end{equation*}
In view of the above, the eigenvalue problem \eqref{introeq} is written concisely as
\begin{equation} \label{eigconc}
\left\{ \begin{aligned}
& \Ls_\ep \Phi^\ep + Q^\ep \Phi^\ep = \lambda_\ep \Sigma^\ep \Phi^\ep & \mbox{in} & \ U, \\
 & \Phi^\ep = 0 & \mbox{on} & \ \partial U,
\end{aligned} \right.
\end{equation}
where $Q_\ep=Q_\ep(x,\omega)$ denotes the matrix with entries $c_{\alpha\beta}(\frac x\ep,\omega)$ and $\Sigma_\ep$ the matrix with entries $\sigma_{\alpha\beta}(\frac x\ep, \omega)$. For future reference, we also write $Q =Q_1$ and $\Sigma = \Sigma_1$.

\subsection{Preliminary facts concerning principle eigenvalues}
The full coupling assumption \eqref{coupled} endows  the linear operators $\Ls$ and $\Ls_\ep$ with certain positivity properties related to the maximum principle (c.f. Sweers \cite{Sw}). The Krein-Rutman theorem may therefore be invoked to yield the existence of a principal eigenvalue of $\lambda_\ep > 0$ of \eqref{eigconc}, which is simple (has a one-dimensional eigenspace) and corresponds to an eigenfunctions $\Phi^\ep$ with entries $\varphi^\ep_\alpha$ which can be chosen to be positive in $U$. We summarize these facts in the following proposition, a proof of which can be found for example in \cite{MS}.

\begin{prop} \label{eigenvalue}
Under assumptions \eqref{AC11}, \eqref{bounds}, \eqref{ellip}, \eqref{diagdom}, \eqref{coupled} and \eqref{sigma}, the system
\begin{equation} \label{eig}
\left\{ \begin{aligned}
& \Ls \Phi + Q \Phi = \lambda_1 \Sigma \Phi & \mbox{in} & \ U, \\
 & \Phi = 0 & \mbox{on} & \ \partial U,
\end{aligned} \right.
\end{equation}
has a unique eigenvalue $\lambda_1 = \lambda_1(U,\omega) > 0$ corresponding to an eigenfunction $\Phi=\Phi(x,\omega)$ with positive entries. The eigenvalue $\lambda_1$ is simple, i.e., the eigenfunction $\Phi$ is unique up to multiplication by a nonzero constant. 
\end{prop}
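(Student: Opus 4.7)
The plan is to apply the Krein--Rutman theorem to a compact, strongly positive operator associated with \eqref{eig}. I would work in the Banach space $X := C_0(\overline U;\R^\m)$ of continuous $\R^\m$-valued functions vanishing on $\partial U$, equipped with the positive cone $K := \{\Phi \in X : \varphi_\alpha \geq 0 \ \text{in} \ U \ \text{for every} \ \alpha\}$.

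The first step is to construct a compact, order-preserving inverse of $\Ls + Q + \mu I$ (with homogeneous Dirichlet data) for $\mu > 0$ sufficiently large. The diagonal-dominance hypothesis \eqref{diagdom} is tailor-made for a monotone iteration: given $\Phi^{(n)} \in K$, define $\Phi^{(n+1)}$ by solving, for each $\alpha$, the \emph{decoupled} scalar Dirichlet problem
\begin{equation*}
\Ls_\alpha \varphi_\alpha^{(n+1)} + (c_{\alpha\alpha}+\mu)\varphi_\alpha^{(n+1)} = f_\alpha - \sum_{\beta \neq \alpha} c_{\alpha\beta}\varphi_\beta^{(n)} \quad \text{in} \ U, \qquad \varphi_\alpha^{(n+1)} = 0 \ \text{on} \ \partial U,
\end{equation*}
whose right-hand side is nonnegative for $F \in K$ because $c_{\alpha\beta} \leq 0$ when $\beta \neq \alpha$. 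For $\mu$ large, $c_{\alpha\alpha}+\mu$ dominates $\sum_{\beta\neq\alpha}|c_{\alpha\beta}|$ uniformly by \eqref{bounds}, so the scalar maximum principle applies and the iteration is a contraction on $X$. Classical Schauder estimates from \eqref{AC11}--\eqref{ellip} yield $C^{2,\alpha}$ bounds on the fixed point, so the resulting solution operator $S_\mu : X \to X$ is compact and satisfies $S_\mu(K) \subseteq K$.

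Setting $TF := S_\mu(\Sigma F)$ produces a compact positive operator on $X$ for which \eqref{eig} is equivalent to $T\Phi = (\lambda_1+\mu)^{-1}\Phi$. The fully coupled hypothesis \eqref{coupled}, combined with the scalar strong maximum principle and Hopf's lemma, shows that whenever $F \in K\setminus\{0\}$ every component of $TF$ is strictly positive in $U$ and bounded below by a positive multiple of $\dist(\cdot,\partial U)$: starting from an index $\alpha_0$ for which $(\Sigma F)_{\alpha_0} \not\equiv 0$ (guaranteed by \eqref{sigma}), the Hopf lemma forces $(TF)_{\alpha_0}$ to be strictly positive, and \eqref{coupled} lets this positivity propagate along any chain $\alpha_0,\alpha_1,\ldots,\alpha_\ell=\beta$ of indices linked by $c_{\alpha_i\alpha_{i+1}}\leq -c$. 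This strong positivity places us in the setting of Krein--Rutman, which provides a unique simple eigenvalue $r(T) > 0$ with a strictly positive eigenvector $\Phi$; defining $\lambda_1 := r(T)^{-1}-\mu$ gives the principal eigenvalue. Finally, $\lambda_1 > 0$ follows by pairing \eqref{eig} with the positive eigenfunction of the formal adjoint system (constructed identically) and observing that $(\Ls+Q)\mathbf 1 \geq 0$ componentwise by \eqref{diagdom} while $\Sigma\mathbf 1 \geq c > 0$ by \eqref{sigma}, so the induced Rayleigh-type quotient is strictly positive.

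The main obstacle is the irreducibility/strong positivity step: one must genuinely exploit \eqref{coupled}, not just \eqref{diagdom}, to conclude that positivity of a single component of $TF$ propagates to all components through the cooperative coupling. Without this, Krein--Rutman would at best yield a positive eigenvector confined to an uncoupled subblock of indices, and simplicity could fail. The surrounding ingredients -- elliptic solvability, Schauder regularity, and the scalar strong maximum principle -- are classical and require only routine bookkeeping.
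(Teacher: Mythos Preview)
The paper does not actually prove this proposition: it cites a reference, after remarking that the full-coupling hypothesis \eqref{coupled} gives the system a maximum principle (via Sweers) so that the Krein--Rutman theorem applies. Your outline follows precisely this route, and the ingredients you assemble --- a compact solution operator, strong positivity propagated along coupling chains using \eqref{coupled}, then Krein--Rutman --- are the right ones.

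There is, however, a concrete error in your reduction. With $S_\mu = (\Ls+Q+\mu I)^{-1}$ and $T = S_\mu\circ\Sigma$, the relation $T\Phi = (\lambda_1+\mu)^{-1}\Phi$ unravels to $(\Ls+Q)\Phi + \mu\Phi = (\lambda_1+\mu)\Sigma\Phi$, which is \emph{not} \eqref{eig} unless $\Sigma=I$. Because the weight $\Sigma$ lives on the right-hand side of the generalized eigenvalue problem, a naive additive shift does not commute with it; shifting by $\mu\Sigma$ instead of $\mu I$ would repair the algebra but may spoil the cooperativity of $Q+\mu\Sigma$ (the off-diagonal entries $c_{\alpha\beta}+\mu\sigma_{\alpha\beta}$ need not remain nonpositive). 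The clean fix is to take $\mu=0$: under \eqref{diagdom} together with the Dirichlet condition, $\Ls+Q$ itself already satisfies the maximum principle for fully coupled cooperative systems (this is precisely the content of Sweers' result the paper invokes), so $T = (\Ls+Q)^{-1}\Sigma$ is well-defined, compact and strongly positive. Then $\lambda_1 = r(T)^{-1}$, and its strict positivity is immediate from $r(T)>0$, which also renders your closing Rayleigh-quotient argument for $\lambda_1>0$ unnecessary.
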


It is well known (see \cite{MS}) that the principal eigenvalue $\lambda_1$ is characterized by the variational formula
\begin{multline} \label{max-min}
\lambda_1(U,\omega) = \sup\left\{ \lambda \in \R: \ \mbox{there exists} \ \Psi \in C^2(U)^k \ \mbox{with positive entries} \right. \\
\left. \phantom{\Psi^K}\mbox{such that} \ \Ls\Psi + Q \Psi \geq \lambda \Sigma \Psi \ \mbox{in} \ U \right\},
\end{multline}
where the differential inequality is meant to hold entry-by-entry in the classical sense. It then follows that the eigenvalue $\lambda_1$ is monotone with respect to the domain, i.e., for each $\omega\in \Omega$,
\begin{equation} \label{domain-mono}
\lambda_1(U,\omega) \leq \lambda_1(V,\omega) \quad \mbox{provided that} \ V \subseteq U.
\end{equation}
Recalling the scaling relation \eqref{scaling} between $\Ls$ and $\Ls^\ep$, we see that the existence and properties of the principal eigenvalue $\lambda_\ep$ for the problem \eqref{eigconc} follow from Proposition~\ref{eigenvalue} and, in fact,
\begin{equation} \label{micro-macro}
\ep^2 \lambda_\ep(U,\omega) = \lambda_1\!\left(\ep^{-1} U, \omega\right).
\end{equation}
Notice from \eqref{domain-mono} and \eqref{micro-macro} that, if $V$ is any domain which is star-shaped with respect to the origin (a property which implies that $sV \subseteq tV$ if $0< s\leq t$), then
\begin{equation} \label{eig-mono}
\mbox{the map} \quad \ep \mapsto \lambda^\ep(V,\omega) : = \ep^2 \lambda_\ep(V,\omega) \quad \mbox{is increasing.}
\end{equation}
This monotonicity property, combined with the ergodic theorem, implies (see Proposition~\ref{eigslim} below) that $\lambda^\ep$ converges, almost surely, to a deterministic constant $\lambda_0$ which is independent of the domain $U$.

\section{Main results} \label{MR}

In this section we formulate our main results, Theorems~\ref{HJthm}-\ref{uesc} below. To properly motivate them, we recall what is known in the periodic setting. To obtain the asymptotics \eqref{lamexp} and \eqref{phiexp}, Capdeboscq \cite{C1,C2} introduced the \emph{$\theta$-exponential} periodic cell problem
\begin{equation} \label{capdeb-cp}
\left\{ \begin{aligned}
& -\tr\!\left( A_\alpha(y) D^2 \psi^\theta_\alpha\right) + b_\alpha\left(y\right) \cdot D\psi^\theta_\alpha + \sum_{\beta=1}^\m c_{\alpha\beta}(y) \psi_\beta^\theta = \lambda(\theta) \sum_{\beta=1}^\m \sigma_{\alpha\beta} (y) \psi^\theta_\beta  \quad \mbox{in} \ \Rd,\\
& \psi_\alpha^\theta > 0 \quad \mbox{in} \ \Rd, \quad y\mapsto \exp(\theta\cdot y) \psi_\alpha^\theta (y) \quad \mbox{is periodic,}
\end{aligned} \right.
\end{equation}
for a parameter $\theta\in \Rd$. He proved that, as a function of $\theta$, the map $\theta \mapsto \lambda(\theta)$ is strictly concave and $\lambda (\theta) \to -\infty$ as $|\theta| \to \infty$. This implies that $\lambda$ attains its maximum $\overline \lambda$ at a unique $\overline \theta \in \Rd$. Writing 
\begin{equation} \label{factor}
u^\ep_\alpha(x): = \frac{\varphi^\ep_\alpha(x)}{\psi^{\overline\theta}_\alpha(\tfrac x\ep)} \qquad \mbox{and} \qquad \mu_\ep:= \frac{\lambda_\ep - \overline \lambda}{\ep^2},
\end{equation}
it was then observed that \eqref{introeq} can be rewritten in the form
\begin{equation*}
-\divg \!\left( \overline A(\tfrac x\ep) Du^\ep_\alpha \right) = \mu_\ep\overline \sigma(\tfrac x\ep) u^\ep \quad \mbox{in} \ U,
\end{equation*}
where the coefficients $\overline A$ and $\overline \sigma$ are periodic and depend on the solution of the $\overline \theta$-exponential cell problem (and that of its adjoint). The homogenization of the latter is classical, leading to the expansion \eqref{lamexp} and factorization \eqref{phiexp}.

\medskip

In the general stationary ergodic setting, we cannot expect a factorization of the form \eqref{factor} to hold in any suitable sense. This is related to the fact that, in random environments, correctors do not in general exist (see Lions and Souganidis~\cite{LS1}), and so there is no suitable analogue of the functions $\psi^\theta_\alpha$. This is not merely a technical problem, and goes to the heart of difficult issues in the random setting typically referred to as ``a lack of compactness."

\medskip

Our analysis in the random case follows a different approach. We begin by introducing the classical Hopf-Cole change of variables
\begin{equation} \label{hopf-cole}
\psi^\ep_\alpha(x,\omega) : = -\ep \log \varphi^\ep_\alpha(x,\omega),
\end{equation}
which transforms \eqref{introeq} into the nonlinear system
\begin{multline} \label{HJeq}
-\ep \tr \left( A_\alpha\left(\tfrac x\ep ,\omega \right) D^2\psi_\alpha^\ep \right) + A_\alpha \left( \tfrac x\ep, \omega\right) D\psi_\alpha^\ep \cdot D\psi_\alpha^\ep + b_\alpha\left(\tfrac x\ep,\omega\right) \cdot D\psi_\alpha^\ep \\
- \sum_{\beta=1}^\m \left(c_{\alpha\beta}\left(\tfrac x\ep, \omega\right) - \lambda_\ep(\omega) \sigma_{\alpha\beta} \left(\tfrac x\ep, \omega\right) \right) \exp\left( \ep^{-1}(\psi_\alpha^\ep - \psi_\beta^\ep)\right)  =0  \quad \mbox{in} \ U.
\end{multline}
The study of the behavior of \eqref{HJeq} as $\ep \to 0$ falls within the general framework of random homogenization of viscous Hamilton-Jacobi equations. The latter has been studied in the scalar case by Lions and Souganidis \cite{LS2,LS3}, Kosygina, Rezakhanlou and Varadhan \cite{KRV}, and recently by the authors \cite{AS}. By adapting the methods of \cite{LS3} and \cite{AS}, we prove a homogenization result for \eqref{HJeq}. This allows us to understand some aspects of the behavior as $\ep \to 0$ of \eqref{introeq} and to prove the main result on the asymptotics for $\lambda_\ep$ and $\varphi^\alpha_\ep$, which is Theorem~\ref{MAIN} below.

\medskip

To explain how the effective Hamiltonian arises, we temporarily ``forget" that the eigenvalue $\lambda_\ep(\omega)$ is an unknown in \eqref{HJeq}. This will be accounted for later with the help of Proposition~\ref{eigslim}, below. Therefore we consider the system
\begin{equation} \label{HJeq2}
-\ep \tr \left( A_\alpha\left(\tfrac x\ep ,\omega \right) D^2u_\alpha^\ep \right) + H_\alpha \!\left(D u^\ep_\alpha, \tfrac x\ep, \omega\right) + f_\alpha \!\left( \frac{u_1^\ep}{\ep}, \ldots, \frac{u_\m^\ep}{\ep}, \mu_\ep, \tfrac x\ep, \omega \right) = g \quad \mbox{in} \ U,
\end{equation}
where $\mu_\ep =\mu_\ep(\omega) \geq 0$ is a (possibly random) parameter, $g\in C(U)$ is given, and we define
\begin{equation} \label{Hafa}
\left\{ \begin{aligned}
& H_\alpha(p,y,\omega) : = A_\alpha\! \left( y, \omega\right) p \cdot p+ b_\alpha\!\left(y,\omega\right) \cdot p, \\
& f_\alpha(z_1,\ldots,z_\m, \mu, y, \omega) : = \sum_{\beta=1}^\m \left( \mu \sigma_{\alpha\beta}(y,\omega) - c_{\alpha\beta} (y,\omega) \right) \exp(z_\alpha-z_\beta).
\end{aligned} \right.
\end{equation}
In writing \eqref{HJeq2} we have essentially put \eqref{HJeq} into a more convenient form, replaced $\lambda_\ep$ with $\mu_\ep$, and introduced a function $g$ on the right side. 

Observe that $H_\alpha=H_\alpha(p,y,\omega)$ is convex as well as coercive (it grows quadratically) in $p$, while for all $\xi\in \R$,
\begin{equation} \label{fatrans}
f_\alpha(z_1,\ldots,z_m,\mu,y,\omega) = f_\alpha(z_1+\xi,\ldots,z_m+\xi,\mu,y,\omega).
\end{equation}
In addition, there exists $C> 0$ depending only on the constant in \eqref{bounds} such that, for each $\alpha =1,\ldots, \m$, $z_1,\ldots,z_\m \in \R$, $y\in \Rd$, $\omega\in \Omega$ and for all $\mu_1,\mu_2\ \geq 0$ and $p_1,p_2\in\Rd$,
\begin{multline} \label{finmu}
\big| f_\alpha(z_1,\ldots,z_m,\mu_1,y,\omega) - f_\alpha(z_1,\ldots,z_m,\mu_2,y,\omega) \big| \\ \leq C \Big( \max_{\beta\in \indx} \exp(z_\alpha-z_\beta) \Big) \left|\mu_1-\mu_2\right|.
\end{multline}
and
\begin{equation} \label{Hinp}
\big| H_\alpha(p_1,y,\omega) - H_\alpha(p_2,y,\omega) \big| \leq C \big( 1 + |p_1| + |p_2| \big) |p_1-p_2|.
\end{equation}

\medskip

Our first result is a homogenization assertion for the system \eqref{HJeq2}. We stress that the each of assumptions stated in Section~\ref{prelim} is in force in Theorems~\ref{HJthm}, \ref{MAIN} and~\ref{uesc}.

\begin{thm} \label{HJthm}
Let $U\subseteq \Rd$ be any domain, $\mu_\ep = \mu_\ep(\omega)$ a bounded nonnegative random variable, and assume that for each $\ep > 0$, $\omega\in \Omega$ and $\alpha=1\ldots,\m$, $u^\ep_\alpha=u^\ep_\alpha(\cdot,\omega)$ is a solution of \eqref{HJeq2}. Assume also that there exists $u\in C(U)$ and $\mu \geq 0$ such that, for every $\alpha=1\ldots,\m$, as $\ep \to 0$ and a.s. in $\omega$, $\mu_\ep \rightarrow \mu$ and $u^\ep_\alpha \rightarrow u$ locally uniformly in $U$. Then  $u$ is a solution of the scalar equation
\begin{equation} \label{HJeff}
\overline H(Du, \mu) = g \quad \mbox{in} \ U, 
\end{equation}
with the effective Hamiltonian $\overline H:\Rd \times \R_+ \to \R$ characterized in Proposition~\ref{mainstep} below.
\end{thm}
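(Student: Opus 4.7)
My approach is the perturbed test function method adapted to the stationary ergodic setting, in the spirit of \cite{LS3,AS}. The effective Hamiltonian $\overline H(p,\mu)$ is constructed in Proposition~\ref{mainstep} below via an auxiliary (approximate) cell problem, which I grant here. I expect that construction to yield, for each $(p,\mu)\in \Rd \times \R_+$ and $\eta>0$, stationary \emph{approximate correctors} $v^{p,\mu,\eta}_\alpha(y,\omega)$ (indexed by $\alpha\in\indx$), almost surely sublinear at infinity, solving in the viscosity sense on $\Rd$
\begin{equation*}
-\tr\!\bigl(A_\alpha(y,\omega) D^2 v^{p,\mu,\eta}_\alpha\bigr) + H_\alpha\bigl(p+Dv^{p,\mu,\eta}_\alpha,y,\omega\bigr) + f_\alpha\bigl(v^{p,\mu,\eta}_1,\ldots,v^{p,\mu,\eta}_\m,\mu,y,\omega\bigr) \leq \overline H(p,\mu) + \eta,
\end{equation*}
along with a matching supercorrector $\tilde v^{p,\mu,\eta}_\alpha$ satisfying the reverse inequality with error $-\eta$.

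To verify that $u$ is a viscosity subsolution of \eqref{HJeff}, I fix $x_0 \in U$ and $\phi \in C^2(U)$ such that $u-\phi$ has a strict local maximum at $x_0$. Setting $p := D\phi(x_0)$, I suppose for contradiction that $\overline H(p,\mu) > g(x_0)+\delta$ for some $\delta>0$, and then introduce the perturbed test functions
\begin{equation*}
\phi^\ep_\alpha(x) := \phi(x) + \ep\, v^{p,\mu,\eta}_\alpha\!\left(\tfrac{x}{\ep},\omega\right), \qquad \alpha\in\indx,
\end{equation*}
with $\eta$ chosen small depending on $\delta$. The critical algebraic point is the translation invariance \eqref{fatrans}: when $\phi^\ep_\alpha/\ep = \phi/\ep + v^{p,\mu,\eta}_\alpha$ is substituted into the $f_\alpha$ term of \eqref{HJeq2}, the common factor $\phi/\ep$ cancels across the arguments, leaving $f_\alpha\bigl(v^{p,\mu,\eta}_1,\ldots,v^{p,\mu,\eta}_\m,\mu_\ep,\tfrac{x}{\ep},\omega\bigr)$. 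Combining the approximate cell inequality with the smoothness of $\phi$ through \eqref{Hinp}, the continuity of $f_\alpha$ in $\mu$ through \eqref{finmu}, and the hypothesis $\mu_\ep \to \mu$, I would deduce that $(\phi^\ep_1,\ldots,\phi^\ep_\m)$ is a classical supersolution of \eqref{HJeq2} with right-hand side $g+\delta/2$ in a small ball $B_r(x_0)$, for all sufficiently small $\ep>0$. Standard uniform convergence arguments together with the comparison principle for weakly coupled cooperative systems (which applies thanks to the full coupling \eqref{coupled}; cf.\ \cite{MS}) then produce a contradiction by locating an interior maximum of $u^\ep_\alpha - \phi^\ep_\alpha$ in $B_r(x_0)$ at which the supersolution inequality must fail. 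The supersolution property for $u$ follows by a symmetric argument using $\tilde v^{p,\mu,\eta}_\alpha$.

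I expect the central obstacle to lie upstream, in Proposition~\ref{mainstep} rather than in the present theorem. Exact correctors generically fail to exist in the stationary ergodic setting \cite{LS1}, so the plan is to build approximate correctors through the metric/subadditive techniques of \cite{LS3,AS}, now adapted to the coupled structure. The coupling via $f_\alpha$ is in fact an ally: the exponential weights $\exp(z_\alpha-z_\beta)$ in \eqref{Hafa}, together with \eqref{diagdom}--\eqref{sigma}, strongly penalize pointwise disparities between the components, forcing $v^{p,\mu,\eta}_1,\ldots,v^{p,\mu,\eta}_\m$ to remain within a uniformly bounded range of one another. This is precisely the mechanism that collapses the vector limit to a single scalar profile $u$ shared by all $\m$ components of $u^\ep$, and correspondingly makes \eqref{HJeff} a scalar equation rather than a limiting system.
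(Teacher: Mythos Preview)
Your approach via the perturbed test function method is exactly the paper's, and the algebraic observation about \eqref{fatrans} collapsing the coupling term is the right one. However, your expectations about what Proposition~\ref{mainstep} delivers are off in a way that matters.

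Proposition~\ref{mainstep} constructs only a \emph{sub}corrector $w_\alpha$ satisfying the one-sided inequality \eqref{HJauxlim}; there is no matching supercorrector $\tilde v^{p,\mu,\eta}_\alpha$. This asymmetry is intrinsic to the stationary ergodic setting (it comes from the convexity of $H_\alpha$ and $f_\alpha$ together with the weak-$\ast$ limit construction), and the paper does not attempt to build a supercorrector. Instead, it uses the discounted solutions $v^\delta_\alpha$ of \eqref{HJaux} themselves as the correctors in the perturbed test function, taking $\delta=\ep$ and $\mu=\mu_\ep$: since $v^\delta$ solves an \emph{equation}, it serves simultaneously for both the sub- and supersolution halves of the argument.

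For this to work one needs $\delta v^\delta_\alpha(y,\omega;p,\mu)\to -\overline H(p,\mu)$ almost surely and uniformly for $y\in B_{R/\delta}$, so that $\ep v^\ep_\alpha(x/\ep,\omega)$ converges locally uniformly to a constant. Proposition~\ref{mainstep} gives this convergence only in $L^1(\Omega)$, see \eqref{mainstepeq} and \eqref{liminfH}. The upgrade to almost sure convergence \eqref{asconv} is a separate, nontrivial step, carried out in Section~\ref{SHJthm} via the metric problem \eqref{HJmet}--\eqref{HJmet-bc} and the subadditive ergodic theorem. You correctly anticipate that metric/subadditive techniques are needed, but they enter precisely here---between Proposition~\ref{mainstep} and the perturbed test function argument---rather than inside Proposition~\ref{mainstep} itself. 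Once \eqref{asconv} is in hand, the remainder of your outline (comparison principle for the cooperative system, contradiction with the strict local extremum) matches the paper's proof.
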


We will see in Proposition~\ref{HAM} that $p \mapsto \overline H(p,\mu)$ is convex and coercive for each $\mu \geq 0$, while $\mu \mapsto \overline H (p,\mu)$ is strictly increasing and $\overline H(0,0) \leq 0$. Therefore, we may define $\overline \lambda$ to be the largest value of $\mu$ for which the graph of $p\mapsto \overline H(\cdot,\mu)$ touches zero, i.e.,
\begin{equation} \label{lbar}
\overline \lambda : = \sup\left\{ \mu \geq 0 : \min_{p\in  \Rd} \overline H (p, \mu) \leq 0 \right\}.
\end{equation}
Since $\overline H$ is continuous, 
\begin{equation}\label{minzeroa}
0 = \min_{p\in \Rd} H(p,\overline \lambda). 
\end{equation}
We know that, in the random environment, $\overline H$ may have a ``flat spot" at its minimum. That is, the set
\begin{equation}\label{barsarezero}
\Theta := \argmin \overline H(\cdot,\overline\lambda) = \{ p \in \Rd \, : \, \overline H(p,\overline \lambda) = 0 \}
\end{equation}
may have a nonempty interior. However, in certain cases, for example, if $p\mapsto \overline H(p,\mu)$ is strictly convex, then  $p\mapsto \overline H(p, \overline \lambda)$ necessarily has a unique minimum, that is, $\Theta = \{ \overline \theta\}$. In the latter situation, we obtain that the eigenfunctions exhibit concentration behavior in the sense of \eqref{concent} below.

We emphasize that $\overline\lambda$ and $\overline\theta$ are deterministic quantities which are independent of the domain $U$.

\medskip

We now state the result regarding the asymptotics of the linear system \eqref{introeq}. In what follows, $U$ is taken to be a smooth, bounded domain and $\lambda_\ep$ and $\varphi^\ep_\alpha$ together solve the system \eqref{introeq}-\eqref{introbc}, subject to the normalization 
\begin{equation}\label{norlizz}
\varphi_1^\ep(x_0) = 1 \quad \mbox{for some distinguished} \ x_0 \in U \ \mbox{and a.s. in} \ \omega.
\end{equation}
The following result characterizes the limit of the eigenvalues $\lambda_\ep$, and uncovers the concentration behavior of the eigenfunctions $\varphi^\ep_\alpha$ in the case that $\overline H\big(\cdot,\overline \lambda \big)$ achieves its minimum at a unique point $\overline \theta$. 

\begin{thm} \label{MAIN}
The principle eigenvalue $\lambda_\ep(\omega,U)$ of  \eqref{introeq}-\eqref{introbc} satisfies
\begin{equation} \label{drift}
\ep^2 \lambda_\ep(\omega,U) \rightarrow \overline\lambda \quad\mbox{as} \ \ep \to 0 \quad \mbox{and a.s. in} \ \omega,
\end{equation}
where $\overline \lambda$ is given by \eqref{lbar}. Suppose in addition that $\overline H(\cdot,\overline \lambda)$ attains its minimum at a unique point $\overline \theta\in \Rd$. Then, for each $\alpha=1,\ldots,\m$ and as $\ep \to 0$, 
\begin{equation} \label{concent}
-\ep \log \varphi^\ep_\alpha(x,\omega) \rightarrow \overline\theta\cdot (x-x_0) \quad \mbox{locally uniformly in} \ U \ \mbox{and a.s. in} \ \omega.
\end{equation}
\end{thm}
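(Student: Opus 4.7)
The plan is to apply the Hopf--Cole transformation $\psi^\ep_\alpha := -\ep\log\varphi^\ep_\alpha$, which converts the linear eigenvalue problem \eqref{introeq}--\eqref{introbc} into the quasilinear system \eqref{HJeq}, an instance of \eqref{HJeq2} with $g\equiv 0$. The assertion \eqref{drift} amounts to identifying the a.s.\ limit of $\ep^2\lambda_\ep$ with the threshold $\overline\lambda$ defined in \eqref{lbar}. The existence of a deterministic a.s.\ limit $\lambda_0$ of $\ep^2\lambda_\ep(\omega,U)$, independent of $U$, comes directly from the monotonicity \eqref{eig-mono} and the multiparameter ergodic theorem (Proposition~\ref{ergthm}), packaged in Proposition~\ref{eigslim}; it remains to identify $\lambda_0=\overline\lambda$ and then to deduce \eqref{concent} under the unique-minimizer hypothesis.

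For compactness I would first derive uniform local Lipschitz estimates for $\{\psi^\ep_\alpha\}$ on compact subsets $K\Subset U$: Harnack-type inequalities for cooperative linear systems (as in \cite{Sw}) applied to the positive eigenfunctions $\varphi^\ep_\alpha$ give $|D\log\varphi^\ep_\alpha|\leq C/\ep$ on $K$, hence $|D\psi^\ep_\alpha|\leq C$. Together with the normalization $\psi^\ep_1(x_0)=0$ from \eqref{norlizz}, this yields precompactness in $C_{\mathrm{loc}}(U)$. I would next use the full-coupling hypothesis \eqref{coupled} to show that every subsequential locally uniform limit is independent of $\alpha$: if $\psi^\ep_\alpha(x_*)-\psi^\ep_\beta(x_*)\to L>0$ at some $x_*\in U$ along a subsequence for a fully coupled pair $(\alpha,\beta)$, then the exponential coupling term in \eqref{HJeq} blows up and cannot be absorbed by the Lipschitz-controlled remaining terms in the viscosity sense (the rigorous implementation is a standard touching argument at a maximum of $\psi^\ep_\alpha-\psi^\ep_\beta$). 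Therefore every subsequential limit is a single Lipschitz function $u\in C(U)$ with $u(x_0)=0$, and Theorem~\ref{HJthm} yields that $u$ solves $\overline H(Du,\lambda_0)=0$ in $U$.

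The identification $\lambda_0=\overline\lambda$ then proceeds by two inequalities. For $\lambda_0\leq\overline\lambda$: the Lipschitz function $u$ is a.e.\ differentiable, and for convex $\overline H$ the viscosity equation forces $\overline H(Du(x),\lambda_0)=0$ at such points, hence $\min_p\overline H(p,\lambda_0)\leq 0$ and $\lambda_0\leq\overline\lambda$ by \eqref{lbar}. For $\lambda_0\geq\overline\lambda$: given $\mu\in(0,\overline\lambda)$, the definition of $\overline\lambda$ supplies $p\in\Rd$ with $\overline H(p,\mu)<0$, and the approximate-corrector machinery underpinning the definition of $\overline H$ in Sections~\ref{H}--\ref{Hbar} produces $w^\ep_\alpha(x,\omega)=p\cdot x+o(1)$ (a.s.) that are approximate viscosity subsolutions of the $\ep$-system with parameter $\mu$ and strictly negative residual $\overline H(p,\mu)+o(1)$. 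Inverting Hopf--Cole, $W^\ep_\alpha:=\exp(-w^\ep_\alpha/\ep)$ furnishes a positive smooth supersolution of \eqref{eigconc} at eigenvalue parameter $\mu$ (in the rescaling consistent with $\mu_\ep\leftrightarrow\ep^2\lambda_\ep$), and the variational formula \eqref{max-min} then yields $\ep^2\lambda_\ep(\omega,U)\geq\mu$ for all sufficiently small $\ep>0$ a.s. Letting $\ep\to 0$ and then $\mu\uparrow\overline\lambda$ gives the reverse inequality and hence \eqref{drift}.

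For the concentration conclusion, suppose $\overline H(\cdot,\overline\lambda)$ attains its minimum at a unique $\overline\theta\in\Rd$. Since $u$ solves $\overline H(Du,\overline\lambda)=0$ and $\min_p\overline H(p,\overline\lambda)=0$, at each differentiability point $Du$ is a minimizer, hence $Du=\overline\theta$ a.e., so the Lipschitz $u$ is affine; the normalization $u(x_0)=0$ fixes $u(x)=\overline\theta\cdot(x-x_0)$. Since the subsequential limit is unique, the full family $-\ep\log\varphi^\ep_\alpha$ converges locally uniformly to this function, proving \eqref{concent}. I expect the main obstacles to be (a) the synchronization step $\psi^\ep_\alpha-\psi^\ep_\beta\to 0$, whose viscosity implementation is delicate because the full coupling \eqref{coupled} enters \eqref{HJeq} only through the exponential; and (b) the reverse-inequality construction, where the nonlinear approximate subsolutions must be transported through the Hopf--Cole exponential to yield positive classical supersolutions of the linear system with uniformly controlled residual sign, a step that relies essentially on the corrector theory developed in Sections~\ref{H}--\ref{Hbar}.
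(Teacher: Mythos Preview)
Your plan follows the paper's strategy almost step for step: Hopf--Cole, local Lipschitz and ``collapse'' estimates for $\psi^\ep_\alpha$, compactness, application of Theorem~\ref{HJthm} together with Proposition~\ref{eigslim} to get $\overline H(D\psi,\lambda_0)=0$, and the identification of the limit under the unique-minimizer hypothesis. The paper obtains both the synchronization $|\psi^\ep_\alpha-\psi^\ep_\beta|\leq C\ep$ and the Lipschitz bound by the same route as Proposition~\ref{HJauxLip}: transform back to the linear cooperative system and invoke the Harnack inequality of \cite{BS}, then run Bernstein. Your touching argument for synchronization is a legitimate alternative, but the Harnack route gives the quantitative bound $O(\ep)$ directly and feeds cleanly into Bernstein.

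The one place where your argument and the paper's genuinely diverge is the reverse inequality $\lambda_0\geq\overline\lambda$. The paper does \emph{not} go through the max--min formula \eqref{max-min}. Instead, for $(p,\mu)$ with $\overline H(p,\mu)<0$ it takes $\delta$ small enough that $\delta v^\delta_\alpha(\cdot,\omega;p,\mu)>0$ on $\delta^{-1}U$ (available from \eqref{mainstepeq}/\eqref{asconv}) and then compares $\psi^\delta_\alpha$ directly with $x\mapsto v^\delta_\alpha(\delta x,\omega;p,\mu)$ inside $U$: since $\psi^\delta_\alpha\to+\infty$ on $\partial U$ while $v^\delta_\alpha$ is bounded, the minimum of the difference is interior, which the comparison principle forbids if $\lambda_\delta\leq\mu$. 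Your route via \eqref{max-min} is viable, but note that \eqref{max-min} requires a $C^2$ competitor, so you cannot use the merely Lipschitz subcorrector $w_\alpha$ of Proposition~\ref{mainstep}; you must use the $C^2$ solutions $v^\delta_\alpha$ of \eqref{HJaux} (Proposition~\ref{HJauxLip}), set $\tilde W_\alpha(y):=\exp(-p\cdot y - v^\delta_\alpha(y))$, and check that $\mathcal L_\alpha\tilde W_\alpha+\sum_\beta c_{\alpha\beta}\tilde W_\beta-\mu\sum_\beta\sigma_{\alpha\beta}\tilde W_\beta=\delta v^\delta_\alpha\,\tilde W_\alpha>0$ on $\delta^{-1}U$. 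With that correction the two arguments are equivalent; the paper's comparison version is slightly more economical because it stays on the nonlinear side and sidesteps the $C^2$ requirement.
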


We present a sufficient condition for the strict convexity of $\overline H$. The following theorem states that the effective Hamiltonian is strictly convex in $p$ under the additional assumption that the random environment is \emph{uniquely ergodic}. Roughly speaking, this means that the limit \eqref{erglim} in the ergodic theorem is uniformly with respect to the translations. The precise definition follows.

\begin{definition} \label{uedef}
The action of the group $( \tau_y)_{y\in \Rd}$ on the environment $(\Omega, \mathcal F,\Prob)$ is \emph{uniquely ergodic} if, for every $\mathcal F$-measurable $f:\Omega \to \R$ such that $\E |f| < \infty$, there exists a subset $\widetilde\Omega \subseteq \Omega$ of full probability such that, for every $\omega\in \widetilde \Omega$,
\begin{equation} \label{uecond}
\lim_{R\to \infty} \sup_{z\in \Rd} \ \bigg| \fint_{B(z,R)} f(\tau_y \omega) \, dy - \E f \bigg| = 0.
\end{equation}
\end{definition}

Equivalently, the action of $( \tau_y)_{y\in \Rd}$ on the environment is uniquely ergodic if and only if $\Prob$ is the unique $\mathcal F$-measurable probability measure which is invariant under the action.

The space of almost periodic functions may be embedded into the stationary ergodic setting (c.f. \cite{AF}), and it is easy to see that the resulting random environment must be uniquely ergodic. Therefore \eqref{concent} holds in particular in the almost periodic setting. The inclusions are proper: it is well-known that there exist stationary ergodic environments which are not uniquely ergodic (for example, any iid environment cannot be uniquely ergodic), and uniquely ergodic environments which are not equivalent to translations of an almost periodic function \cite{AW}.

\begin{thm} \label{uesc}
Assume that the action of $( \tau_y)_{\Rd}$ on $(\Omega, \mathcal F,\Prob)$ is uniquely ergodic. Then, for each $\mu \geq 0$, 
\begin{equation}\label{}
p \mapsto \overline H(p,\mu) \quad \mbox{is strictly convex.}
\end{equation}
Hence $\Theta = \{ \overline \theta \}$, for some $\overline \theta \in \Rd$, and the concentration phenomenon \eqref{concent} holds.
\end{thm}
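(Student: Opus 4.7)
The plan is to adapt to the system setting the strategy used for scalar viscous Hamilton--Jacobi equations in uniquely ergodic environments, which is developed in the authors' companion paper~\cite{AS}. The key structural ingredient is that each $H_\alpha(p,y,\omega) = A_\alpha(y,\omega)p\cdot p + b_\alpha(y,\omega)\cdot p$ is \emph{uniformly} strictly convex in $p$: by \eqref{ellip}, for any $p_1,p_2\in\Rd$ and $t\in(0,1)$,
\begin{equation*}
H_\alpha(tp_1+(1-t)p_2,y,\omega) \leq t H_\alpha(p_1,y,\omega)+(1-t)H_\alpha(p_2,y,\omega) - \lambda\,t(1-t)|p_1-p_2|^2.
\end{equation*}
This quantitative gap is what drives strict convexity of $\overline H$; the role of unique ergodicity is to keep the gap visible after passing to the homogenized limit.

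First I would recall the approximate-corrector characterization of $\overline H$ established in Proposition~\ref{mainstep} for the system \eqref{HJeq2}. For each $p\in\Rd$, $\mu\geq 0$ and $\delta>0$, let $W^{\delta,p}=(w_1^{\delta,p},\ldots,w_\m^{\delta,p})$ denote the vector of approximate correctors associated with the $p$-shifted cell problem, so that $-\delta w_\alpha^{\delta,p}(0,\omega)\to \overline H(p,\mu)$ a.s. in $\omega$. Standard interior Lipschitz bounds (available from \eqref{ellip}, \eqref{bounds} and the Bernstein-type gradient estimates used throughout \cite{AS,LS3}) give uniform control on $Dw_\alpha^{\delta,p}$. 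Under the unique ergodicity hypothesis of Definition~\ref{uedef}, these bounds together with the identity $\delta w_\alpha^{\delta,p}(y,\omega)=\delta w_\alpha^{\delta,p}(0,\tau_y\omega)$ allow me to upgrade the almost sure convergence to convergence \emph{uniform in} $y\in\Rd$ (and uniform in $p$ on compact sets): for a.s.\ $\omega$,
\begin{equation*}
\limsup_{\delta\to 0}\sup_{y\in\Rd}\big|\delta w_\alpha^{\delta,p}(y,\omega)+\overline H(p,\mu)\big|=0.
\end{equation*}
This upgrade is the technical heart of the argument and mirrors~\cite{AS}, the only new point being that one must apply unique ergodicity to each component simultaneously while retaining coupling control via \eqref{coupled}, \eqref{sigma} and the translation invariance \eqref{fatrans} of $f_\alpha$.

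Now suppose for contradiction that $\overline H(\cdot,\mu)$ fails to be strictly convex: there exist $p_1\neq p_2$ and $t\in(0,1)$ with $\bar p:=tp_1+(1-t)p_2$ such that $\overline H(\bar p,\mu)=t\overline H(p_1,\mu)+(1-t)\overline H(p_2,\mu)$. Form the convex combination $\bar W^\delta:=tW^{\delta,p_1}+(1-t)W^{\delta,p_2}$. Using the quantitative strict convexity of $H_\alpha$ displayed above together with \eqref{fatrans} (so that the coupling term applied to $\bar W^\delta/\delta$ is identical to that applied to a common shift and can be compared with the coupling terms for $W^{\delta,p_i}/\delta$ by convexity of the exponential), one checks that $\bar W^\delta$ is a subsolution of the $\bar p$-cell problem whose right-hand side beats zero by an amount at least
\begin{equation*}
\lambda\,t(1-t)\,\fint_{B_R}|Dw_\alpha^{\delta,p_1}-Dw_\alpha^{\delta,p_2}|^2\,dy
\end{equation*}
for every large $R$. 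By Proposition~\ref{ergthm} (in fact by the uniform ergodic theorem of Definition~\ref{uedef}), this spatial average converges to a deterministic quantity which is bounded below uniformly in small $\delta$, unless $Dw_\alpha^{\delta,p_1}\equiv Dw_\alpha^{\delta,p_2}$; the latter would force the differences $w_\alpha^{\delta,p_1}-w_\alpha^{\delta,p_2}$ to be constant, and inserting this into the two cell problems and subtracting leaves the incompatible term $(p_1-p_2)\cdot Dw_\alpha^{\delta,p_i}+|p_1|^2-|p_2|^2$ weighted by $A_\alpha$, which cannot vanish identically. Sending $\delta\to 0$ and using the uniform convergence from the previous step to identify $-\delta \bar w_\alpha^{\delta}\to t\overline H(p_1,\mu)+(1-t)\overline H(p_2,\mu)$, one obtains
\begin{equation*}
\overline H(\bar p,\mu)\leq t\overline H(p_1,\mu)+(1-t)\overline H(p_2,\mu)-c|p_1-p_2|^2
\end{equation*}
with $c>0$, contradicting the equality assumption. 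Strict convexity follows, and together with the coercivity and continuity of $\overline H(\cdot,\overline\lambda)$ established in Proposition~\ref{HAM} yields $\Theta=\{\overline\theta\}$; \eqref{concent} then follows from Theorem~\ref{MAIN}.

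The main obstacle I expect is transferring the uniform-convergence upgrade from the scalar case of \cite{AS} to the weakly coupled system, because the coupling function $f_\alpha$ contains the exponentials $\exp(\ep^{-1}(u_\alpha-u_\beta))$ that are notoriously sensitive to $L^\infty$ fluctuations between components. Controlling these differences along the approximate correctors — in particular, establishing uniform-in-$\delta$ bounds on $\osc(w_\alpha^{\delta,p}-w_\beta^{\delta,p})$ using the full coupling hypothesis \eqref{coupled} and \eqref{sigma} — is where the bulk of the delicate work lies, but the comparison principle for cooperative systems (Proposition~\ref{eigenvalue}) and the translation invariance \eqref{fatrans} make this tractable by the arguments already in place in Sections~\ref{H}--\ref{SHJthm}.
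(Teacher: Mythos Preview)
Your setup matches the paper: form the convex combination $w^\delta_\alpha := \tfrac12 v^\delta_\alpha(\cdot;p_1,\mu) + \tfrac12 v^\delta_\alpha(\cdot;p_2,\mu)$, exploit the uniform strict convexity of $H_\alpha$ to obtain a subsolution inequality for $w^\delta$ with an extra nonpositive term $-h_\alpha$ on the right, where
\[
h_\alpha(y,\omega) = \tfrac{\lambda}{4}\big|\,p_1-p_2 + Dv^\delta_\alpha(y,\omega;p_1,\mu) - Dv^\delta_\alpha(y,\omega;p_2,\mu)\,\big|^2
\]
(note the $p_1-p_2$ contribution, which you dropped; it is precisely what gives $\E h_\alpha \geq \tfrac{\lambda}{4}|p_1-p_2|^2$ via Jensen and \eqref{EDv0}). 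The uniform-in-$y$ convergence upgrade you describe is also in the paper (Lemma~\ref{unifconv}) and is used in the same way.

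The genuine gap is the passage from ``the spatial average of $h_\alpha$ is bounded below'' to ``$\overline H(\bar p,\mu) \leq t\overline H(p_1,\mu) + (1-t)\overline H(p_2,\mu) - c|p_1-p_2|^2$.'' The comparison principle for \eqref{HJaux} is a \emph{pointwise} statement: you cannot subtract $c\delta^{-1}$ from $w^\delta_\alpha$ and retain the subsolution property unless $h_\alpha \geq c$ pointwise, and in general it is not. Your assertion that $\bar W^\delta$ is a subsolution ``whose right-hand side beats zero by an amount at least $\fint_{B_R}|Dw_\alpha^{\delta,p_1}-Dw_\alpha^{\delta,p_2}|^2$'' conflates a pointwise differential inequality with its spatial average; there is no mechanism in your argument to feed an averaged quantity back into a viscosity comparison.

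The paper closes this gap with two ingredients you do not mention. First, a ``no bare spots'' lemma (Lemma~\ref{ueappl}): unique ergodicity guarantees $\eta,\rho,R>0$ such that $\{h_\alpha\geq\eta\}$ occupies a proportion at least $\rho$ of \emph{every} ball of radius $R$ --- this is a much stronger use of unique ergodicity than merely controlling averages. Second, a \emph{growth lemma} (Lemma~\ref{growthlem}), i.e.\ a quantitative strong maximum principle derived from the ABP inequality for cooperative systems (Busca--Sirakov): under the uniform-density hypothesis on $h_\alpha$, the function $M(y):=\min_\alpha\big(v^\delta_\alpha(y;\bar p,\mu)-w^\delta_\alpha(y)\big)$ satisfies $M(y)\geq\kappa+\inf_{B(y,R)}M$ for a fixed $\kappa>0$, which forces $M$ to be unbounded and contradicts the boundedness of $v^\delta$ and $w^\delta$. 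This ABP-based step is the missing analytic idea; without it the positivity of $h_\alpha$ in measure cannot be converted into a strict inequality for $\overline H$.
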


\section{The auxiliary macroscopic problem} \label{H}

For each fixed $\delta > 0$, $\mu \geq 0$ and $p \in  \Rd$, we introduce the \emph{auxiliary macroscopic system}
\begin{equation} \label{HJaux}
\delta v^\delta_\alpha - \tr ( A_\alpha\!\left(y, \omega \right)\! D^2 v^\delta_\alpha ) + H_\alpha( p+ Dv^\delta_\alpha,y,\omega) + f_\alpha\!\left( v^\delta_1, \ldots, v^\delta_\m, \mu, y, \omega\right) = 0 \quad \mbox{in} \ \Rd,
\end{equation}
with $H_\alpha$ and $f_\alpha$ defined in \eqref{Hafa}. In the periodic setting, \eqref{HJaux} is known as the ``cell problem," and it is central to the homogenization of Hamilton-Jacobi equations. In this section we establish the well-posedness of \eqref{HJaux}, a necessary precursor to the next section, where we construct $\overline H$ via a limit procedure using~$v^\delta$. 

\medskip

Following the usual viscosity theoretic approach, we first give a comparison principle for \eqref{HJaux}. The structural assumptions in Section~\ref{prelim} yield the following proposition, which is essentially due to Ishii and Koike \cite{IK}. The result in \cite{IK} was stated only for equations in bounded domains, but we extend it to $\Rd$ via a simple argument using the convexity of $H_\alpha$. 

\begin{prop}\label{vdeltacmp}
Fix $\delta > 0$, $\mu\geq 0$, $p \in \Rd$ and $\omega\in \Omega$. Suppose that $u_\alpha \in \USC(\Rd)$ is bounded above and $v_\alpha \in \LSC(\Rd)$ is bounded below, and $u=u_\alpha$ and $v=v_\alpha$ satisfy, for each $\alpha=1,\ldots,\m$,
\begin{equation*}
\delta u_\alpha - \tr ( A_\alpha\!\left(y, \omega \right)\! D^2 u_\alpha ) + H_\alpha( p+ Du_\alpha,y,\omega) + f_\alpha\!\left( u_1, \ldots, u_\m, \mu, y, \omega\right) \leq 0 \quad \mbox{in} \ \Rd,
\end{equation*}
and
\begin{equation*}
\delta v_\alpha - \tr ( A_\alpha\!\left(y, \omega \right)\! D^2 v_\alpha ) + H_\alpha( p+ Dv_\alpha,y,\omega) + f_\alpha\!\left( v_1, \ldots, v_\m, \mu, y, \omega\right) \geq 0 \quad \mbox{in} \ \Rd.
\end{equation*}
Then, for every $\alpha=1,\ldots,\m$, $u_\alpha\leq v_\alpha$ in $\Rd$.
\end{prop}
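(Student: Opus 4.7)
My plan is to combine the comparison principle of Ishii--Koike~\cite{IK} for weakly coupled cooperative systems on bounded domains with a penalization argument, exploiting the convexity (in fact, the explicit quadratic structure) of $H_\alpha$ in $p$ together with the translation invariance \eqref{fatrans} of the coupling $f_\alpha$, in order to handle the fact that the equations are posed on all of $\Rd$.

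First I would verify that, when restricted to a bounded subdomain $V\subseteq\Rd$, the system \eqref{HJaux} falls within the Ishii--Koike framework. Our sign hypotheses \eqref{diagdom}--\eqref{sigma} give $\mu\sigma_{\alpha\beta}-c_{\alpha\beta}\geq 0$ for $\beta\neq\alpha$, which translates into the quasimonotonicity of the zeroth-order coupling required in \cite{IK}; the term $\delta u_\alpha$ supplies the diagonal strict monotonicity; and uniform ellipticity \eqref{ellip} of $A_\alpha$ together with the local Lipschitz bound \eqref{Hinp} on $H_\alpha$ provides the remaining structural conditions. Hence comparison on $V$ holds whenever $u_\alpha\leq v_\alpha$ on $\partial V$.

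To extend from bounded $V$ to all of $\Rd$, I would introduce the penalization $\tilde u_\alpha:= u_\alpha-\eta\phi$ with $\phi(y):=(1+|y|^2)^{1/2}$ and $\eta\in(0,1)$ small. By \eqref{fatrans}, the common vertical shift by $\eta\phi(y)$ cancels in $f_\alpha$, so $f_\alpha(\tilde u_1+\eta\phi,\ldots,\tilde u_\m+\eta\phi,\mu,y,\omega)=f_\alpha(\tilde u_1,\ldots,\tilde u_\m,\mu,y,\omega)$. Expanding the quadratic Hamiltonian via
\[
H_\alpha(q+r,y,\omega)=H_\alpha(q,y,\omega)+(2A_\alpha q+b_\alpha)\cdot r+A_\alpha r\cdot r
\]
with $q:=p+D\tilde u_\alpha$ and $r:=\eta D\phi$, using $|D\phi|\leq 1$ and $\|D^2\phi\|_{L^\infty}\leq C$, and absorbing the cross-term $2\eta A_\alpha q\cdot D\phi$ into a small fraction of $H_\alpha(p+D\tilde u_\alpha,y,\omega)$ by Cauchy--Schwarz for the quadratic form $A_\alpha$ together with Young's inequality and the uniform ellipticity \eqref{ellip}, one verifies that $\tilde u_\alpha$ is a viscosity subsolution of an analogous equation whose right-hand side is bounded above by $-\kappa_\eta(y)$, with $\kappa_\eta(y)\to+\infty$ as $|y|\to\infty$ for each fixed $\eta>0$ (the dominant contribution being $-\eta\delta\phi$).

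Finally, since $u_\alpha$ is bounded above and $v_\alpha$ is bounded below, $\tilde u_\alpha-v_\alpha\to-\infty$ as $|y|\to\infty$, so for $R=R(\eta)$ sufficiently large one has $\tilde u_\alpha\leq v_\alpha$ on $\partial B_R$ for each $\alpha$. Applying the Ishii--Koike comparison inside $B_R$ to the strict subsolution $\tilde u$ and the supersolution $v$---the strictly negative perturbation $-\kappa_\eta$ only aids the comparison, being absorbed in the standard doubling-of-variables step---I would conclude $\tilde u_\alpha\leq v_\alpha$ on $B_R$, hence $u_\alpha\leq v_\alpha+\eta\phi$ on $\Rd$, and letting $\eta\to 0$ gives the claim. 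I expect the main technical obstacle to be the bookkeeping in showing that the cross-terms generated by the $\eta D\phi$ shift can be absorbed into a small fraction of $H_\alpha(p+D\tilde u_\alpha,y,\omega)$ uniformly in $\eta$; this is precisely where the explicit quadratic form of $H_\alpha$ and the uniform ellipticity bound \eqref{ellip} become essential.
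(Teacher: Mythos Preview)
Your overall plan---reduce to Ishii--Koike on a large ball by penalizing the subsolution so that it tends to $-\infty$ at infinity, then send the penalization parameter to zero---is exactly the paper's strategy. The difference is in the penalization itself, and this is where your argument has a gap.

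You set $\tilde u_\alpha:=u_\alpha-\eta\phi$ and then try to show that $\tilde u$ is a subsolution of (essentially) the same system. The term you correctly identify as the obstacle is the cross-term
\[
H_\alpha(p+D\tilde u_\alpha)-H_\alpha(p+D\tilde u_\alpha+\eta D\phi)=-2\eta\,A_\alpha(p+D\tilde u_\alpha)\cdot D\phi-\eta^2 A_\alpha D\phi\cdot D\phi-\eta\,b_\alpha\cdot D\phi.
\]
Your plan to ``absorb $2\eta A_\alpha q\cdot D\phi$ into a small fraction of $H_\alpha(q)$'' via Young's inequality yields an inequality of the form
\[
\delta\tilde u_\alpha-\tr(A_\alpha D^2\tilde u_\alpha)+(1-\theta)H_\alpha(p+D\tilde u_\alpha)+f_\alpha(\tilde u)\leq -\eta\delta\phi+C_{\eta,\theta},
\]
i.e.\ $\tilde u$ is a subsolution for the \emph{modified} Hamiltonian $(1-\theta)H_\alpha$, not for $H_\alpha$. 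But $v$ is a supersolution only for $H_\alpha$, and since $H_\alpha$ can take negative values there is no reason $v$ should be a supersolution for $(1-\theta)H_\alpha$. A black-box application of \cite{IK} therefore does not go through; the cross-term involves the test-function gradient, which has no a~priori bound, and cannot be controlled by the $-\eta\delta\phi$ term alone. (One could repair this by opening up the doubling-of-variables argument itself, where the gradients on the two sides are coupled, but that is not what you describe.)

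The paper avoids this entirely by penalizing via a \emph{convex combination} rather than subtraction: it takes an explicit smooth function $\varphi(y)=k-(1+|y|^2)^{1/2}$ satisfying $\delta\varphi-\tr(A_\alpha D^2\varphi)+H_\alpha(p+D\varphi)\leq 0$ (for suitable $k$), and sets $u_{\alpha,\ep}:=(1-\ep)u_\alpha+\ep\varphi$. Convexity of $H_\alpha$ in $p$, together with~\eqref{fatrans} and the convexity of $f_\alpha$ in the differences $z_\alpha-z_\beta$, then gives directly that $u_{\alpha,\ep}$ is a subsolution of the \emph{original} system---no cross-term ever appears. Since $u_{\alpha,\ep}\to-\infty$ at infinity, the black-box comparison from \cite[Theorem~4.7]{IK} applies on a large ball, and sending $\ep\to 0$ finishes the proof.
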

\begin{proof}
According to the structural assumptions, we may select $k >0$ sufficiently large (depending on $\delta$) so that, for each $\alpha=1,\ldots,\m$, the function $\varphi(y):= k - (1+|y|^2)^{1/2}$ is a smooth solution of
\begin{equation*}
\delta \varphi - \tr ( A_\alpha\!\left(y, \omega \right)\! D^2 \varphi ) + H_\alpha( p+ D \varphi,y,\omega)  \leq 0 \quad \mbox{in} \ \Rd.
\end{equation*}
Modify $u_\alpha$ by defining, for each $\ep > 0$,
\begin{equation*}
u_{\alpha,\ep}(y):= (1-\ep) u_\alpha(y) + \ep\varphi(y).
\end{equation*}
Formally, using the convexity of $H_\alpha$ and \eqref{fatrans}, for each $\alpha =1,\ldots,\m$, we have
\begin{equation*}
\delta u_{\alpha,\ep} - \tr ( A_\alpha\!\left(y, \omega \right)\! D^2u_{\alpha,\ep}  ) + H_\alpha( p+ Du_{\alpha,\ep} ,y,\omega) + f_\alpha\!\left( u_{1,\ep} , \ldots, u_{m,\ep} , \mu, y, \omega\right) \leq 0 \quad \mbox{in} \ \Rd.
\end{equation*}
This can be made rigorous  either by using the fact that $\varphi$ is smooth or by applying \cite[Lemma A.1]{AS}. Since $v_\alpha$ is bounded below and $u_{\alpha,\ep}(y) \rightarrow -\infty$ as $|y| \to \infty$, for all $R> 0$ sufficiently large and for each $\alpha=1,\ldots,\m$, we have
\begin{equation*}
u_{\alpha,\ep} \leq v_{\alpha} \quad \mbox{in} \ \Rd\setminus B_R.
\end{equation*}
It then follows from  \cite[Theorem 4.7]{IK} that, for each $\alpha=1,\ldots,\m$,
\begin{equation*}
u_{\alpha,\ep} \leq v_{\alpha} \quad \mbox{in} \ \Rd,
\end{equation*}
and, after sending $\ep \to 0$, the conclusion.
\end{proof}

The unique solvability of \eqref{HJaux} follows easily from Proposition~\ref{vdeltacmp} and the Perron method.

\begin{prop} \label{vdelta}
For each $\delta > 0$, $\mu\geq 0$, $p \in \Rd$, and $\omega \in \Omega$, there exists a unique bounded viscosity solution $v^\delta(\cdot, \omega;p,\mu) = (v^\delta_1(\cdot,\omega;p,\mu),\ldots, v^\delta_\m(\cdot,\omega;p,\mu))\in C( \Rd)^\m$ of \eqref{HJaux}, which is stationary. Moreover, there exist  $C,c>0$, depending only on the constants in \eqref{ellip}, \eqref{bounds} and \eqref{sigma}, such that, for each $\omega \in \Omega$ and $\alpha=1,\ldots,\m$,
\begin{equation} \label{deltabnd}
-\big(\Lambda|p|^2 + C(|p| + \mu) \big) \leq \delta v^\delta_\alpha(\cdot, \omega;p,\mu) \leq -\big( \lambda |p|^2 - C(|p| +1) \big) \quad \mbox{in} \ \Rd.
\end{equation}
\end{prop}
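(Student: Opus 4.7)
The plan is to deduce all four assertions (the bounds, existence, uniqueness, and stationarity) from Proposition~\ref{vdeltacmp} together with the construction of explicit constant sub- and supersolutions. First I would establish the two-sided bound \eqref{deltabnd} by testing the equation against constant-in-$y$ candidates $v_\alpha\equiv K$. For such a constant, the Hessian and gradient terms vanish and, crucially, the translation invariance \eqref{fatrans} gives $f_\alpha(K,\ldots,K,\mu,y,\omega)=\sum_{\beta}(\mu\sigma_{\alpha\beta}(y,\omega)-c_{\alpha\beta}(y,\omega))$. Using \eqref{ellip}, \eqref{bounds}, and the diagonal dominance \eqref{diagdom} (which gives $-\sum_\beta c_{\alpha\beta}\leq 0$), together with the upper bound on $H_\alpha$ from $\Lambda|p|^2+C|p|$, I would verify that $v_\alpha\equiv -\delta^{-1}(\Lambda|p|^2+C(|p|+\mu))$ is a classical subsolution for each $\alpha$. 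Symmetrically, using the lower bound $H_\alpha\geq\lambda|p|^2-C|p|$ and $\mu\sigma_{\alpha\beta}\geq 0$, $-c_{\alpha\beta}\geq -C$, the constant $v_\alpha\equiv-\delta^{-1}(\lambda|p|^2-C(|p|+1))$ (with $C$ large) is a supersolution. Hence Proposition~\ref{vdeltacmp} sandwiches any bounded solution between these two constants, yielding \eqref{deltabnd}.

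For existence, I would invoke the Perron method adapted to weakly coupled cooperative systems. The key monotonicity is that, for $\beta\neq \alpha$, $\partial_{z_\beta}f_\alpha=-(\mu\sigma_{\alpha\beta}-c_{\alpha\beta})\exp(z_\alpha-z_\beta)\leq 0$, since $c_{\alpha\beta}\leq 0$ for $\alpha\neq\beta$; this is exactly the sign needed for Perron-type constructions in quasi-monotone systems (as in \cite{IK}). Taking as an ordered pair the constant sub/supersolutions constructed above, I would define
\begin{equation*}
v^\delta_\alpha(y):=\sup\{\,w_\alpha(y)\,:\,(w_1,\dots,w_\m)\ \text{is a bounded subsolution of \eqref{HJaux} lying below the supersolution}\,\}
\end{equation*}
and verify in the standard way (upper/lower semicontinuous envelopes, stability under sup, and bumping on small balls using the interior regularity afforded by the uniformly elliptic linear part $-\tr(A_\alpha D^2\cdot)$) that $v^\delta=(v^\delta_1,\dots,v^\delta_\m)$ is a bounded viscosity solution. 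Uniqueness is immediate from Proposition~\ref{vdeltacmp} applied with the pair $(v^\delta,\tilde v^\delta)$ in both orders.

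Finally, stationarity follows from uniqueness together with the stationarity of the coefficients \eqref{stationary}: for any fixed $z\in\Rd$, the translated tuple $y\mapsto v^\delta(y+z,\omega;p,\mu)$ is a bounded solution of \eqref{HJaux} with $\omega$ replaced by $\tau_z\omega$, because $A_\alpha(y+z,\omega)=A_\alpha(y,\tau_z\omega)$ and similarly for $b_\alpha$, $c_{\alpha\beta}$, $\sigma_{\alpha\beta}$. By uniqueness,
\begin{equation*}
v^\delta(y+z,\omega;p,\mu)=v^\delta(y,\tau_z\omega;p,\mu)\quad\text{for every}\ y,z\in\Rd,\ \omega\in\Omega,
\end{equation*}
which is the stationarity statement.

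The only genuinely delicate point is the Perron step, since the monograph-style statement of the method and the comparison result \cite[Theorem 4.7]{IK} quoted in Proposition~\ref{vdeltacmp} are stated for bounded domains; I would handle this by running the Perron construction locally on an exhausting sequence of balls, using the global constant barriers both to anchor the ordering at the boundary of each ball and, via Proposition~\ref{vdeltacmp} on $\Rd$, to pass to the limit without losing the subsolution property. The weakly coupled structure causes no additional trouble because of the sign of $\partial_{z_\beta}f_\alpha$ noted above. All other steps amount to straightforward verifications from the structural hypotheses \eqref{bounds}--\eqref{sigma}.
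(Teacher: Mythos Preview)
Your proposal is correct and follows essentially the same approach as the paper: constant sub- and supersolutions yield \eqref{deltabnd}, Perron's method gives existence, Proposition~\ref{vdeltacmp} gives uniqueness, and stationarity follows from uniqueness plus stationarity of the coefficients. The only minor difference is that the paper defines the Perron function directly on all of $\Rd$ (taking the supremum over bounded-above subsolutions and invoking Proposition~\ref{vdeltacmp} to get the upper bound) rather than localizing to an exhausting sequence of balls; since the comparison principle is already global, the localization is unnecessary.
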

\begin{proof}
We suppress dependence on $\omega$, since it plays no role in the proof. Denote
\begin{equation*}
\underline{\varphi}_\alpha(y): =  - \frac1\delta \big( \Lambda|p|^2 + C(|p| +\mu) \big) \qquad \mbox{and} \qquad \overline{\varphi}_\alpha(y) : = - \frac1\delta \big( \lambda|p|^2 - C(|p|+1) \big).
\end{equation*}
It is easy to check, using \eqref{ellip}, \eqref{bounds}, \eqref{diagdom} and \eqref{sigma}, that $\underline{\varphi}_\alpha$ and $\overline{\varphi}_\alpha$ are, respectively, a subsolution and supersolution of \eqref{HJaux} in $\Rd$. Define, for each $\alpha=1,\ldots,\m$,
\begin{multline*}
v^\delta_\alpha(y): = \sup \big\{ \varphi_\alpha (y): \varphi_1,\ldots,\varphi_\m \in \USC(\Rd) \ \mbox{are bounded above and} \\ \varphi=(\varphi_1,\ldots,\varphi_\m) \ \mbox{is a subsolution of} \ \eqref{HJaux} \ \mbox{in} \ \Rd \big\}.
\end{multline*}
It is clear from the definition above and Proposition~\ref{vdeltacmp} that $\underline{\varphi}_\alpha \leq v^\delta_\alpha \leq \overline{\varphi}_\alpha$ in $\Rd$, which gives \eqref{deltabnd}. Standard arguments from the theory of viscosity solutions, utilizing Proposition~\ref{vdeltacmp} imply that $v^\delta\in C(\Rd)^\m$ and $v^\delta$ is a solution of \eqref{HJaux}. We refer to \cite{CIL} and to Section 3 of \cite{IK} for details. According to Proposition~\ref{vdeltacmp}, $v^\delta$ is the unique bounded solution of \eqref{HJaux}. The stationarity of $v^\delta$ is an immediate consequence of the stationarity of the coefficients and the uniqueness of $v^\delta$.
\end{proof}

In the following proposition, we use the Harnack inequality for linear elliptic systems (see Busca and Sirakov~\cite{BS}) to obtain an estimate, independently of $\delta$, on the difference between $v^\delta_\alpha$ and $v^\delta_\beta$. The Bernstein method then yields uniform Lipschitz bounds on $v^\delta$.

\begin{prop} \label{HJauxLip}
For each $\delta > 0$, $\mu \geq 0$, $p\in \Rd$ and $\omega \in \Omega$, the unique bounded solution $v^\delta(\cdot,\omega;p,\mu)$ of \eqref{HJaux} belongs to $C^2(\Rd)^\m$ and there exists $C>0$, which depends on upper bounds for $|p|$ and $\mu$ but is independent of $\delta$ and $\omega$, such that 
\begin{equation}\label{collapse}
\max_{\alpha,\beta\in \indx} \esssup_{\Rd} \big| v^\delta_\alpha(\cdot,\omega) - v^\delta_\beta(\cdot, \omega)\big| \leq C
\end{equation}
and
\begin{equation}\label{lipschitz}
\max_{\alpha\in \indx} \esssup_{\Rd} \big| Dv^\delta_\alpha(\cdot, \omega) \big| \leq C. 
\end{equation}
\end{prop}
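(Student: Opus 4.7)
The plan is a two-step argument: first establish \eqref{collapse} by reversing the Hopf-Cole transformation to linearize the system and invoking the Busca--Sirakov Harnack inequality for cooperative elliptic systems; then use \eqref{collapse} to decouple the system up to a bounded forcing and conclude \eqref{lipschitz} via Bernstein-type estimates.

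For the first step, I would set $\varphi_\alpha(y) := \exp(-p\cdot y - v^\delta_\alpha(y))$. A direct computation (the usual Hopf--Cole transformation run in reverse) shows that $(\varphi_\alpha)_{\alpha=1}^\m$ solves a linear system of the form
\begin{equation*}
\Ls_\alpha \varphi_\alpha + \sum_{\beta=1}^\m \widetilde c_{\alpha\beta}(y,\omega)\, \varphi_\beta = 0 \quad \mbox{in} \ \Rd,
\end{equation*}
where $\widetilde c_{\alpha\beta} = c_{\alpha\beta} - \mu \sigma_{\alpha\beta}$ for $\alpha \neq \beta$ and $\widetilde c_{\alpha\alpha} = c_{\alpha\alpha} - \mu\sigma_{\alpha\alpha} - \delta v^\delta_\alpha$. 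The bound \eqref{deltabnd} makes these coefficients uniformly bounded in $y$, $\omega$ and $\delta$; the off-diagonal entries remain nonpositive thanks to \eqref{diagdom} and $\sigma_{\alpha\beta} \geq 0$; and the full coupling hypothesis \eqref{coupled} is preserved since $\mu \sigma_{\alpha\beta} \geq 0$. Applying the Harnack inequality of \cite{BS} on the ball $B_2(y_0)$ for arbitrary $y_0 \in \Rd$ gives $\sup_{B_1(y_0)} \varphi_\alpha \leq C \inf_{B_1(y_0)} \varphi_\beta$ with $C$ uniform in $y_0$, $\omega$ and $\delta$; taking logarithms converts this multiplicative bound into an additive bound on $v^\delta_\beta - v^\delta_\alpha$, yielding \eqref{collapse}.

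For the second step, \eqref{collapse} renders the nonlinear coupling $f_\alpha\bigl(v^\delta_1,\ldots,v^\delta_\m,\mu,y,\omega\bigr)$ uniformly bounded on $\Rd$, so each $v^\delta_\alpha$ satisfies a \emph{scalar} viscous Hamilton--Jacobi equation with bounded right-hand side, quadratic gradient nonlinearity and uniformly elliptic, Lipschitz second-order term. Standard interior regularity for such equations promotes $v^\delta_\alpha$ to $C^{2,\eta}_{\mathrm{loc}}(\Rd)$, and the classical Bernstein argument applied to $w_\alpha := |Dv^\delta_\alpha|^2$ produces \eqref{lipschitz}: differentiating the equation in $y_k$, using the uniform ellipticity of $A_\alpha$ together with the quadratic coercivity of $p \mapsto H_\alpha(p,y,\omega)$, and evaluating at a maximum point of a suitably localized version of $w_\alpha$ (a cutoff on $B(y_0,R)$ for arbitrary $y_0$) yields a gradient bound depending only on $|p|$, $\mu$, $\|\delta v^\delta_\alpha\|_\infty$ and the constants from \eqref{bounds}, \eqref{ellip}, \eqref{sigma}---all uniform in $\delta$ by \eqref{deltabnd} and the Step~1 bound.

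The main obstacle is executing the Bernstein calculation rigorously at the viscosity-solution level. The standard remedy is to approximate by mollifying the coefficients, run the classical Bernstein computation on the resulting smooth solutions, and pass to the limit, using the uniqueness in Proposition~\ref{vdeltacmp} to identify the limit with $v^\delta$. The decisive conceptual point, made possible by Step~1, is that the Lipschitz estimate depends on $v^\delta_\alpha$ only through the \emph{bounded} combination $\delta v^\delta_\alpha$, and not through $v^\delta_\alpha$ itself (which has size $O(1/\delta)$); this is what keeps \eqref{lipschitz} uniform as $\delta \to 0$, which in turn is what will make $v^\delta$ useful for constructing $\overline H$ in the next section.
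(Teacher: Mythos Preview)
Your Step~1 is essentially the paper's argument: the paper uses $w^\delta_\alpha:=\exp(-v^\delta_\alpha)$ rather than your $\varphi_\alpha=\exp(-p\cdot y-v^\delta_\alpha)$, which merely shifts the drift term by $2A_\alpha p$ instead of modifying the diagonal zeroth-order coefficient, but either linearization leads to a cooperative, fully coupled system with bounded coefficients to which the Busca--Sirakov Harnack inequality applies.

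Your Step~2, however, has a genuine gap. Once you ``decouple'' by treating $f_\alpha(v^\delta_1,\ldots,v^\delta_\m,\mu,y,\omega)$ as a bounded forcing $g_\alpha(y)$ and attempt the \emph{classical} Bernstein method (differentiating in $y_k$ and multiplying by $v_{\alpha,y_k}$), you are forced to differentiate $g_\alpha$. But $\partial_{y_k} g_\alpha$ contains the terms $\sum_\beta(\mu\sigma_{\alpha\beta}-c_{\alpha\beta})e^{v_\alpha-v_\beta}(v_{\alpha,y_k}-v_{\beta,y_k})$, i.e.\ the very gradients $Dv^\delta_\beta$ you are trying to bound. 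The collapse estimate \eqref{collapse} bounds $f_\alpha$ itself, not its spatial derivative, so the scalar Bernstein computation does not close. The paper resolves this by \emph{not} decoupling: it keeps $f_\alpha$ in its structured form, sets $w_\alpha=\varphi|Dv_\alpha|^2$ with a cutoff $\varphi$, and evaluates at a point $\widehat x$ and index $\alpha'$ realizing the \emph{joint} maximum $\max_\alpha\sup_x w_\alpha$. At such a point $|Dv_{\alpha'}|\geq|Dv_\beta|$, which forces the coupling contribution
\[
\sum_{\beta} e^{v_{\alpha'}-v_\beta}(\mu\sigma_{\alpha'\beta}-c_{\alpha'\beta})\big(|Dv_{\alpha'}|^2-Dv_{\alpha'}\!\cdot Dv_\beta\big)\geq 0
\]
to have a favorable sign and drop out. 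The rest of the Bernstein computation then proceeds as in the scalar case. Your outline would be fine if you replaced ``scalar Bernstein on each component'' by ``Bernstein on the system, maximizing over $\alpha$ as well as $x$''; alternatively, a non-differentiation Lipschitz estimate (Ishii--Lions doubling, or Barles' weak Bernstein) for the scalar equation with merely bounded right-hand side would also work, but that is not what you described.
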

\begin{proof}
For convenience, we omit the explicit dependence on $\omega$ since it has no role in the argument. The smoothness of $v^\delta$ is immediate from classical elliptic regularity. The estimate \eqref{collapse} is a consequence of a Harnack inequality for a linear cooperative systems. To see this, we observe that $w^\delta=w^\delta_\alpha$ with $w^\delta_\alpha : = \exp(-v^\delta_\alpha)$ is a classical solution of
\begin{multline*}
-\tr ( A_\alpha(y) D^2w^\delta_\alpha) + \left( 2A_\alpha(y) p + b_\alpha(y) \right) \cdot Dw^\delta_\alpha \\ - (A_\alpha(y) p\cdot p + b_\alpha(y) \cdot p + \delta v^\delta_\alpha) w^\delta_\alpha + \sum_{\beta=1}^\m ( c_{\alpha\beta}(y) - \mu\sigma_{\alpha\beta}) w^\delta_\beta = 0 \quad \mbox{in} \ \Rd,
\end{multline*}
which is a cooperative, fully coupled linear system, and thanks to \eqref{deltabnd}, has bounded coefficients. The Harnack inequality found in \cite[Corollary 8.1]{BS} yields that\begin{equation*}
w^\delta_\alpha(y) \leq C_0 w^\delta_\beta(y) \quad \mbox{for each} \ y\in  \Rd \ \mbox{and} \ \alpha,\beta=1,\ldots,\m.
\end{equation*}
Rewriting this inequality in terms of $v^\delta_\alpha$ and $v^\delta_\beta$ yields \eqref{collapse}.

\medskip

According to \eqref{collapse}, the last term $f_\alpha(v^\delta_1,\ldots,v^\delta_\m,\mu,y)$ in \eqref{HJaux} is bounded independently of $\delta$. We next use the Bernstein method to obtain the Lipschitz estimate \eqref{lipschitz}. Although it proceeds very similarly as the proof of \cite[Proposition 6.11]{LS2} for the case of a scalar equation, for the convenience of the reader we give complete details here because the form of the system \eqref{HJaux} complicates the argument somewhat. For ease of notation we do not display the explicit dependence of $v^\delta_\alpha$ on $\delta$. Select a cutoff function $\varphi \in C^\infty( \Rd)$ such that
\begin{equation}
0 \leq \varphi \leq 1, \ \ \varphi \equiv 1 \ \mbox{on} \ B_{1}, \ \ \varphi \equiv 0 \ \mbox{in} \  \Rd \!\setminus \! B_2, \ \ \left| D^2 \varphi \right| \leq C\varphi^{\frac12} \ \ \mbox{and} \ \ |D\varphi| \leq C\varphi^{\frac34}. \label{varphiest}
\end{equation}
It suffices to choose for example $\varphi = \psi^4$, where $\psi$ is a cutoff function satisfying the first three conditions of \eqref{varphiest}. Denote $\xi_\alpha :=  |Dv_\alpha|^2$ and $w_\alpha : = \varphi \xi_\alpha = \varphi |Dv_\alpha|^2$. An easy computation yields
\begin{align}
Dv_\alpha & = \xi_\alpha D\varphi + 2\varphi D^2v_\alpha Dv_\alpha, \label{easyderv} \\
\label{easyderv2}
D^2w_\alpha & = \xi_\alpha D^2\varphi + 2 D\varphi  \otimes (D^2v_\alpha Dv_\alpha) + \varphi (D^3v_\alpha Dv_\alpha + D^2v_\alpha D^2v_\alpha).
\end{align}
Differentiating \eqref{HJaux} with respect to $y_i$, multiplying the result by $\varphi v_{\alpha,y_i}$ and using \eqref{easyderv} and \eqref{easyderv2}, we obtain after some calculation that, on the support of $\varphi$,
\begin{multline} \label{bernmess}
\delta w_\alpha -  \tr\!\left( A_\alpha D^2 w_\alpha \right) + \varphi \tr(D^2v_\alpha A_\alpha D^2v_\alpha)  - \varphi Dv_\alpha \cdot \tr(D_yA_\alpha D^2v_\alpha) \\
+ \xi_\alpha \tr (A_\alpha D^2\varphi) + A_\alpha \varphi^{-1}D\varphi\cdot (Dw_\alpha -\xi_\alpha D\varphi)\\
+ \varphi Dv_\alpha \cdot \left( (D_y A_\alpha (p+Dv_\alpha) + D_y b_\alpha ) \cdot (p+Dv_\alpha) \right) + A_\alpha(Dw_\alpha - \xi_\alpha D\varphi) \cdot (p+Dv_\alpha) \\
+\tfrac 12  b_\alpha \cdot (Dw_\alpha - \xi_\alpha D\varphi) + \varphi Dv_\alpha \cdot \sum_{\beta=1}^\m e^{v_\alpha-v_\beta} D_y\!\left( \mu \sigma_{\alpha\beta} - c_{\alpha\beta} \right) \\
+ \varphi \sum_{\beta=1}^\m e^{v_\alpha-v_\beta} \left( \mu \sigma_{\alpha\beta} - c_{\alpha\beta} \right) \left( |Dv_\alpha|^2 - Dv_\alpha\cdot Dv_\beta \right) = 0.
\end{multline}
Now suppose that $\widehat x \in B_2$ and $\alpha' =1,\ldots,\m$ are such that 
\begin{equation} \label{maxmax}
w_{\alpha'}(\widehat x) = \max_{\alpha\in \indx} \sup_{x\in \Rd} w_\alpha(x).
\end{equation}
To simplify the notation we assume that $\alpha' =1$. Then $Dw_1(\widehat x) = 0$ and $D^2w_1(\widehat x) \leq 0$. Using these together with \eqref{collapse}, \eqref{varphiest} and the observation that \eqref{maxmax} implies that, at $x=\widehat x$, 
\begin{equation*}
\sum_{\beta=1}^\m e^{v_1-v_\beta} \left( \mu \sigma_{1\beta} - c_{1\beta} \right) \left( |Dv_1|^2 - Dv_1\cdot Dv_\beta \right) \geq 0,
\end{equation*}
after some work we obtain, from \eqref{bernmess},
\begin{equation*} \label{bernmess2}
\varphi \tr (AM^2) \leq C \!\left( \varphi |q|( |M|+1) +|q|^2 (|D^2\varphi| +  |D\varphi| + \varphi^{-1}|D\varphi|^2) + |q|^3( \varphi + |D\varphi|)  \right),
\end{equation*}
where for convenience we have written $M : = D^2v_1(\widehat x)$, $q : = Dv_1(\widehat x)$ and $A=A_1(\widehat x)$. Applying some elementary inequalities and using \eqref{Alip} we get
\begin{equation*}
\varphi\tr (AM^2) \leq C_\eta\! \left( \varphi + |q|^2 \varphi^{\frac12} + |q|^3 \varphi^{\frac34} \right) + \eta \varphi |M|^2,
\end{equation*}
where $\eta> 0$ is selected below. By \eqref{ellip} and the Cauchy-Schwarz inequality,
\begin{equation*}
| M|^2 \leq C (\tr (AM))^2  \leq C \tr (AM^2).
\end{equation*}
Hence by making $\eta > 0$ small we get 
\begin{equation} \label{trAM2}
\varphi\tr(AM^2) \leq C\! \left( 1 + |q|^2 \varphi^{\frac12} + |q|^3 \varphi^{\frac34} \right).
\end{equation}
Using the PDE \eqref{HJaux} and the estimates \eqref{deltabnd}, \eqref{collapse}, we have
\begin{equation} \label{trAM3}
(\tr(AM))^2 = \left( H_1(p+Dv_1,y) + f_1(v_1,\ldots,v_k,\mu,y) + \delta v_1 \right)^2 \geq \lambda(|q|^2 - C)^2.
\end{equation}
Putting \eqref{trAM2} and \eqref{trAM3} together, we obtain that
\begin{equation*}
|q|^4 \varphi \leq C\!\left( 1 + |q|^2 \varphi^{\frac12} + |q|^3 \varphi^{\frac34} \right).
\end{equation*}
This yields an upper bound on $|q|^4\varphi$ and hence
\begin{equation*}
(w_1(\widehat x))^2 = |q|^4 \varphi^2 \leq |q|^4 \varphi \leq C. 
\end{equation*}
Thus
\begin{equation*}
\max_{\alpha\in \indx} \sup_{B_1} \big| Dv_\alpha \big|^2 \leq \max_{\alpha\in \indx} \sup_{\Rd} \big| w_\alpha \big| = w_1(\widehat x) \leq C.
\end{equation*}
Since the constant $C> 0$ in the last inequality is independent of the fact we centered our ball at the origin, the proof is complete.
\end{proof}

\begin{remark}
The essential boundedness and stationarity of $v_\alpha^\delta$ and \eqref{lipschitz} imply that
\begin{equation}\label{EDv0}
\E\big[Dv_\alpha^\delta(0,\cdot)\big] = 0.
\end{equation}
This follows from an argument of Kozlov~\cite{K}, see also \cite[Lemma A.5]{AS}.
\end{remark}

We conclude this section by studying the dependence of the solution of \eqref{HJaux} on the parameters $p$ and $\mu$. It is a necessary ingredient in the proof of Proposition~\ref{mainstep} and  will yield important properties of $\overline H$.

\begin{prop} \label{CDE}
Let $v^\delta=v^\delta_\alpha(\cdot,\omega;p,\mu)$ be as in Proposition~\ref{vdelta}. Then: (i) for each $k> 0$ there exists $C > 0$, depending on $\m$, $k$ and the constant in \eqref{bounds}, such that, for all $\delta > 0$, $\omega\in \Omega$, $p_1,p_2\in \Rd$ and $0 \leq \mu \leq k$,
\begin{equation} \label{CDEinp}
\max_{\alpha\in\indx}\sup_{\Rd} \, \delta \big| v^\delta_\alpha (\cdot,\omega;p_1,\mu) - v^\delta_\alpha(\cdot,\omega;p_2,\mu) \big| \leq C\big( 1+\big|p_1\big| + \big|p_2\big| \big)\, \big|p_1-p_2\big|.
\end{equation}
(ii) for each $k> 0$ there exist $C,c> 0$, depending only on $\m$ and the constants in \eqref{sigma} and \eqref{bounds},  such that, for all $\delta > 0$, $\omega\in \Omega$, $p\in B(0,k)$, $0 \leq \mu_1 \leq \mu_2 \leq k$ and $\alpha =1,\ldots,\m$,
\begin{equation} \label{CDEinmu}
c(\mu_2-\mu_1) \leq \delta v_\alpha^\delta(\cdot,\omega;p,\mu_1) - \delta v_\alpha^\delta(\cdot,\omega;p,\mu_2) \leq C (\mu_2-\mu_1) \quad \mbox{in} \ \Rd.
\end{equation}
\end{prop}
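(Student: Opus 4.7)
The plan is to derive both estimates from Proposition~\ref{vdeltacmp} by perturbing the known solution of one instance of \eqref{HJaux} into a sub- or supersolution of a neighboring instance. The essential observation, in both parts, is the translation invariance \eqref{fatrans}: since $f_\alpha$ depends only on the differences $z_\alpha - z_\beta$, shifting every $v^\delta_\beta$ by the same constant $K/\delta$ leaves $f_\alpha$ unchanged and contributes only the term $K$ to the $\delta v$ part of the equation. All other modifications to the equation must therefore be absorbed by choosing $K$ appropriately.

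For part (ii), I would fix $w_\alpha := v^\delta_\alpha(\cdot,\omega;p,\mu_1)$ and test $\tilde w_\alpha := w_\alpha - K/\delta$ in the $\mu_2$ equation. Using that $w$ solves the $\mu_1$ equation, the residual reduces to
\begin{equation*}
-K + \big(f_\alpha(w,\mu_2,y,\omega) - f_\alpha(w,\mu_1,y,\omega)\big) \;=\; -K + (\mu_2-\mu_1)\sum_{\beta=1}^\m \sigma_{\alpha\beta}(y,\omega)\, \exp(w_\alpha - w_\beta).
\end{equation*}
The bound \eqref{collapse} gives $|w_\alpha - w_\beta| \leq C$ uniformly in $\delta$, so $\exp(w_\alpha-w_\beta)$ is pinched between positive constants; combined with the two-sided bounds $c \leq \sum_\beta \sigma_{\alpha\beta} \leq C$ coming from \eqref{sigma} and \eqref{bounds}, the inner sum is bounded above and below by positive constants depending only on the structural data and on $k$. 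Choosing $K = c'(\mu_2-\mu_1)$ makes $\tilde w$ a supersolution of the $\mu_2$ equation, yielding the lower bound in \eqref{CDEinmu} by comparison; choosing $K = C'(\mu_2-\mu_1)$ makes $\tilde w$ a subsolution, yielding the upper bound.

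For part (i), the strategy is parallel: take $w_\alpha := v^\delta_\alpha(\cdot,\omega;p_2,\mu)$ and test $\tilde w_\alpha := w_\alpha \pm K/\delta$ in the $p_1$ equation. Now the residual is
\begin{equation*}
\pm K + H_\alpha(p_1 + Dw_\alpha, y, \omega) - H_\alpha(p_2 + Dw_\alpha, y, \omega),
\end{equation*}
and from the explicit quadratic form of $H_\alpha$ (equivalently \eqref{Hinp}) we have
\begin{equation*}
|H_\alpha(p_1+Dw_\alpha,y,\omega) - H_\alpha(p_2+Dw_\alpha,y,\omega)| \leq C\,|p_1-p_2|\,\big(1+|p_1|+|p_2|+|Dw_\alpha|\big).
\end{equation*}
The uniform Lipschitz bound \eqref{lipschitz} from Proposition~\ref{HJauxLip} then caps $|Dw_\alpha|$ independently of $\delta$, so a choice $K = C(1+|p_1|+|p_2|)|p_1-p_2|$ produces sub- and supersolutions; comparison gives \eqref{CDEinp}.

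The main obstacle is a bookkeeping issue in (i): to obtain the clean polynomial factor $1+|p_1|+|p_2|$ rather than an uncontrolled function of $|p_1|,|p_2|$, one needs that the Lipschitz constant in \eqref{lipschitz} grows at most linearly in $|p|$, so that the residual term $|Dw_\alpha|$ can be reabsorbed into $1+|p_2|$. This is consistent with the scaling $\delta v^\delta = O(|p|^2)$ in \eqref{deltabnd} and can be checked by revisiting the Bernstein estimate in the proof of Proposition~\ref{HJauxLip} and tracking the $|p|$-dependence of the constants in \eqref{trAM2}--\eqref{trAM3}. Once that linear-growth refinement of \eqref{lipschitz} is in hand, both bounds follow from the comparison argument sketched above with constants depending only on $\m$, $k$, and the structural data.
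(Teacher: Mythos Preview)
Your argument for (ii) is essentially the paper's: shift $v^\delta(\cdot;p,\mu_1)$ by $\pm K/\delta$, use \eqref{collapse} and \eqref{sigma} to bound the $\sigma$-sum two-sidedly, and compare.

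For (i), however, the paper takes a different route that avoids the Lipschitz estimate altogether. Rather than shifting by a constant and invoking \eqref{lipschitz} to control the Hamiltonian difference, the paper exploits convexity of $H_\alpha$ directly: with $\lambda := (1+|p_1|+|p_2|)^{-1}|p_1-p_2|$, it writes
\[
w^\delta_\alpha := (1-\lambda)\, v^\delta_\alpha(\cdot\,;p_2,\mu) = (1-\lambda)\big((p_2-p_1)\cdot y + v^\delta_\alpha(\cdot\,;p_2,\mu)\big) + \lambda\big(\lambda^{-1}(1-\lambda)(p_1-p_2)\cdot y\big),
\]
recognizes this as a convex combination of an affine function and a tilt of $v^\delta(\cdot;p_2)$, and uses convexity of $H_\alpha$ and \eqref{fatrans} to get a subsolution of the $p_1$ equation with right-hand side $\lambda H_\alpha(\lambda^{-1}(p_1-(1-\lambda)p_2),y,\omega)$. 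Since $\lambda^{-1}|p_1-(1-\lambda)p_2| \leq 1+|p_1|+2|p_2|$, the quadratic growth of $H_\alpha$ gives this error the form $C(1+|p_1|+|p_2|)|p_1-p_2|$ immediately. Comparison with $v^\delta(\cdot;p_1)$ and the crude bound \eqref{deltabnd} then finish.

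The trade-off: the paper's argument uses only convexity and \eqref{deltabnd}, so the polynomial dependence on $|p_1|,|p_2|$ falls out for free, uniformly in $p\in\Rd$. Your approach is conceptually simpler (a straight additive shift) but, as you correctly flag, hinges on the refinement $\sup|Dv^\delta_\alpha(\cdot;p,\mu)| \leq C(1+|p|)$, which is not the statement of \eqref{lipschitz} and requires reopening the Bernstein proof. That refinement is plausible (for $|Dv^\delta|\gg |p|$ the term $H_\alpha(p+Dv^\delta)$ dominates in \eqref{trAM3}), but until it is actually carried out your proof of (i) has a gap that the paper's convexity trick simply sidesteps.
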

\begin{proof}
The proof of \eqref{CDEinp} closely follows the proof of \cite[Lemma 4.6]{AS}. We fix $p_1,p_2\in \Rd$, $\mu\geq 0$ and $\omega\in\Omega$ and write $v^\delta_{\alpha,i}(y):= v^\delta_\alpha(y,\omega;p_i,\mu)$ for $i\in \{1,2\}$ and $\alpha=1,\ldots,\m$. Define $\lambda:= (1+|p_1|+|p_2|)^{-1}|p_1-p_2|$ and set
\begin{equation*}
w^\delta_\alpha(y) :  = (1-\lambda) v^\delta_{\alpha,2}(y) = (1-\lambda) \big( (p_2-p_1)\cdot y +v^\delta_{\alpha,2}(y) \big) + \lambda \big( \lambda^{-1} (1-\lambda)(p_1-p_2)\cdot y\big).
\end{equation*}
It is easy to check, using the convexity of $H_\alpha$ and \eqref{fatrans}, that $w^\delta_\alpha$ satisfies
\begin{multline*}
\delta w^\delta_\alpha - \tr \!\big( A_\alpha(y, \omega ) D^2 w^\delta_\alpha \big) + H_\alpha( p+ Dw^\delta_\alpha,y,\omega) + f_\alpha\!\left( w^\delta_1, \ldots, w^\delta_\m, \mu, y, \omega\right) \\ \leq \lambda H_\alpha\big( \lambda^{-1} (p_1-(1-\lambda)p_2),y,\omega\big) \quad \mbox{in} \ \Rd.
\end{multline*}
Since
\begin{equation*}
\lambda^{-1} \big| p_1 - (1-\lambda) p_2 \big| \leq 1 + |p_1 + 2|p_2|, 
\end{equation*}
there exists a constant $C>0$ depending only on the constant in \eqref{bounds}, so that
\begin{equation*}
\lambda H_\alpha\big( \lambda^{-1} (p_1-(1-\lambda)p_2),y,\omega\big) \leq C\big(1+|p_1|+|p_2| \big) \big| p_1-p_2 \big|.
\end{equation*}
By subtracting $\delta^{-1}C\big(1+|p_1|+|p_2| \big) \big| p_1-p_2 \big|$ from $w^\delta_{\alpha}$ we obtain a subsolution of \eqref{HJaux}, and Proposition~\ref{vdeltacmp} yields
\begin{equation*}
(1-\lambda) v^\delta_{\alpha,2} = w^\delta_\alpha \leq v^\delta_{\alpha,1} + \delta^{-1}C\big(1+|p_1|+|p_2| \big) \big| p_1-p_2 \big|.
\end{equation*}
Using \eqref{deltabnd} and rearranging, we obtain
\begin{equation*}
v^\delta_{\alpha,2} - v^\delta_{\alpha,1} \leq \delta^{-1} C \big( (1+|p_1|+|p_2| \big) \big| p_1-p_2 \big|
\end{equation*}
where $C$ depends additionally on an upper bound for $\mu$. Multiplying by $\delta$ and repeating the argument with the indices reversed yields \eqref{CDEinp}.

The first inequality in \eqref{CDEinmu} easily follows from \eqref{collapse}, \eqref{sigma}, and Proposition~\ref{vdeltacmp}. Indeed, the function $v^\delta(y,\omega;p,\mu_1) - \delta^{-1} c (\mu_2-\mu_1)$ is a supersolution of \eqref{HJaux} for $\mu = \mu_2$, for a $c>0$ with appropriate dependencies. The second inequality follows similarly, since the function $v^\delta(y,\omega;p,\mu_1) - \delta^{-1} C (\mu_2-\mu_1)$ is a subsolution the same equation for sufficiently large $C> 0$.
\end{proof}

\section{Construction of the effective Hamiltonian $\overline H(p,\mu)$} \label{Hbar}

In the next proposition, which is the analogue of \cite[Proposition 5.1]{AS}, we identify $\overline H$ as a limit of $\delta v^\delta$ as $\delta \to 0$, and construct a subcorrector. The argument is based on ideas introduced in \cite{LS3}.

\begin{prop} \label{mainstep}
There exist a continuous function $\overline H:\Rd\times\R_+ \to \R$ and a subset $\Omega_1\subseteq \Omega$ of full probability, such that, for every $p\in \Rd$, $\mu \geq 0$, $R> 0$, and $\alpha=1,\ldots,\m$,
\begin{equation} \label{mainstepeq}
\lim_{\delta \to 0}\sup_{y\in B_{R/\delta}} \left| \delta v^\delta_\alpha (y,\cdot;p,\mu) + \overline H(p,\mu) \right| = 0 \quad \mbox{in} \ L^1(\Omega, \Prob),
\end{equation}
and, for every $\omega\in \Omega_1$,
\begin{equation}\label{liminfH}
\overline H(p,\mu) = - \liminf_{\delta \to 0} \delta v^\delta(0,\omega;p,\mu) \quad \mbox{a.s. in} \ \omega.
\end{equation}
Moreover, there exists $w = w_\alpha(y,\omega;p,\mu)$ such that, for every $(p,\mu,\alpha) \in \Rd\times\R \times \indx$ and $\omega \in \Omega_1$, $w_\alpha(\cdot,\omega;p,\mu)\in C^{0,1}(\Rd)$ as well as
\begin{equation}\label{}
Dw_\alpha(\cdot,\cdot;p,\mu) \quad \mbox{is stationary and} \ \ \E\big[ Dw_\alpha(0,\cdot;p,\mu) \big] =0,
\end{equation}
\begin{equation} \label{sublininfty}
\lim_{|y|\to\infty} |y|^{-1}w_\alpha(y,\omega) = 0
\end{equation}
and
\begin{equation}\label{HJauxlim}
- \tr ( A_\alpha\!\left(y, \omega \right)\! D^2 w_\alpha ) + H_\alpha( p+ Dw_\alpha,y,\omega) + f_\alpha\!\left( w_1, \ldots,w_\m, \mu, y, \omega\right) \leq \overline H(p,\mu) \quad \mbox{in} \ \Rd.
\end{equation} 
\end{prop}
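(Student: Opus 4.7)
The plan is to construct $\overline H(p,\mu)$ as the deterministic limit of $-\delta v^\delta(0,\cdot)$ and to extract the subcorrector $w$ by compactness from the recentered functions $v^\delta - v^\delta_1(0,\cdot)$, adapting the strategy of \cite{LS3} and \cite[Proposition~5.1]{AS} to the present system. First, I set
\begin{equation*}
\overline H^-(p,\mu,\omega) := -\limsup_{\delta\to 0} \delta v^\delta_1(0,\omega;p,\mu), \quad \overline H^+(p,\mu,\omega) := -\liminf_{\delta\to 0} \delta v^\delta_1(0,\omega;p,\mu),
\end{equation*}
which are finite by \eqref{deltabnd}. Stationarity gives $\delta v^\delta_1(0,\tau_z\omega)=\delta v^\delta_1(z,\omega)$, so the uniform Lipschitz bound \eqref{lipschitz} yields $|\delta v^\delta_1(z,\omega)-\delta v^\delta_1(0,\omega)|\leq C\delta|z|\to 0$; hence $\overline H^\pm(p,\mu,\cdot)$ is $\tau$-invariant and, by ergodicity, a.s.\ equal to a deterministic constant, independent of $\alpha$ thanks to \eqref{collapse}.

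Second, fixing $\omega$ in this full-measure event, I choose $\delta_k\to 0$ realizing $\overline H^-$ and pass to a limit along the recentered $w^k_\alpha := v^{\delta_k}_\alpha(\cdot,\omega)-v^{\delta_k}_1(0,\omega)$. By \eqref{collapse} and \eqref{lipschitz}, the $w^k_\alpha$ are locally uniformly bounded and uniformly Lipschitz, so Arzela-Ascoli plus a diagonal extraction yields a subsequence converging locally uniformly to a Lipschitz limit $w_\alpha$. Since $f_\alpha$ is invariant under common additive shifts of its first $\m$ arguments by \eqref{fatrans}, rewriting \eqref{HJaux} for $w^k$ produces right-hand side $-\delta_k v^{\delta_k}_\alpha$, which by \eqref{lipschitz} and \eqref{collapse} converges locally uniformly to $\overline H^-(p,\mu)$; viscosity stability then shows that $w$ satisfies \eqref{HJauxlim} (in fact with equality and right-hand side $\overline H^-$).

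Third, for \eqref{sublininfty} and the equality $\overline H^+=\overline H^-$, I note that $Dw^k_\alpha = Dv^{\delta_k}_\alpha$ is stationary, uniformly bounded, and has zero mean by \eqref{EDv0}. Weak-$\ast$ compactness of the laws of these gradient fields together with a Kozlov-type measurable selection (along the lines of \cite[Lemma~A.5]{AS}) allows me to refine the subsequence so that $Dw_\alpha$ is stationary with $\E[Dw_\alpha(0,\cdot)]=0$. The multiparameter ergodic theorem (Proposition~\ref{ergthm}) then gives $\fint_{B_R} Dw_\alpha(y,\omega)\,dy \to 0$, and a standard sublinearity lemma yields \eqref{sublininfty}. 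With the sublinear subcorrector in hand, the standard perturbed test function/comparison argument from \cite{LS3,AS}, which uses $w$ against $v^\delta$ via Proposition~\ref{vdeltacmp} (the sublinearity of $w$ controlling the competition at infinity), produces $\delta v^\delta_\alpha(0,\omega)\geq -\overline H^-(p,\mu) + o(1)$, whence $\overline H^+\leq \overline H^-$, and the common value is $\overline H(p,\mu)$.

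Finally, the a.s.\ limit at $y=0$ is upgraded to \eqref{mainstepeq} via discretization: stationarity gives $\delta v^\delta_\alpha(y,\omega)=\delta v^\delta_\alpha(0,\tau_y\omega)$, \eqref{lipschitz} makes $y\mapsto\delta v^\delta_\alpha(y,\omega)$ $C\delta$-Lipschitz, and hence a $(\epsilon/(C\delta))$-net of cardinality $O((R/\epsilon)^\d)$ covers $B_{R/\delta}$ up to oscillation $\epsilon$; combining this with $L^1(\Omega)$ convergence at the origin (from \eqref{deltabnd} and dominated convergence) and letting $\delta\to 0$ then $\epsilon\to 0$ gives \eqref{mainstepeq}. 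Continuity of $\overline H(p,\mu)$ comes by passing \eqref{CDEinp} and \eqref{CDEinmu} to the limit, and taking $\Omega_1$ as the intersection of the full-probability events for countably many $(p,\mu)$ extends by continuity. The main obstacle is the third paragraph: pointwise-in-$\omega$ limits of stationary random fields are not automatically stationary, so the subsequence must be chosen at the level of laws rather than realizations to transfer stationarity and zero mean to $Dw$, which in turn is what enables the sublinearity of $w$ and hence the closure of the loop $\overline H^+=\overline H^-$.
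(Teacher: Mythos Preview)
Your overall strategy mirrors the paper's, but there is a genuine gap in your third paragraph, which you yourself flag at the end: constructing $w$ as a pointwise-in-$\omega$ Arzel\`a--Ascoli limit along an $\omega$-dependent subsequence cannot yield a stationary gradient $Dw$. Your proposed repair---``weak-$*$ compactness of the laws \ldots\ together with a Kozlov-type measurable selection''---is not a real argument: convergence in law does not produce a random field on the original probability space with the required pointwise relation to $v^{\delta_k}$, and \cite[Lemma~A.5]{AS} concerns sublinearity of functions with stationary, mean-zero gradients, not any selection procedure. Without stationarity and $\E[Dw_\alpha]=0$ you cannot invoke the ergodic theorem to obtain \eqref{sublininfty}, and without sublinearity the comparison with $v^\delta$ collapses, so $\overline H^+=\overline H^-$ is left unproven.

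The paper resolves this by taking the subsequence at the level of $L^\infty(B_R\times\Omega)$ from the outset: one extracts a single, $\omega$-independent sequence $\delta_j\to 0$ along which $-\delta_j v^{\delta_j}_\alpha \rightharpoonup \overline H$, $\widehat v^{\delta_j}_\alpha \rightharpoonup w_\alpha$, and $D\widehat v^{\delta_j}_\alpha \rightharpoonup Dw_\alpha$ all converge weak-$*$. Stationarity of $Dw_\alpha$ and $\E[Dw_\alpha]=0$ are then inherited directly from the sequence and \eqref{EDv0}. The price is that weak-$*$ convergence does not pass through the nonlinearities; here the convexity of $H_\alpha$ in $p$ and of $f_\alpha$ in the differences $z_\alpha-z_\beta$ is used crucially (together with the equivalence of distributional and viscosity subsolutions for linear inequalities) to obtain \eqref{HJauxlim} only as an \emph{inequality}. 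A subsolution is all that the comparison step requires, so your parenthetical claim of equality is unnecessary---and in fact incompatible with the mode of convergence that makes the stationarity argument work. Finally, since $\overline H$ is defined as a weak-$*$ limit rather than as $-\limsup$, the paper does not directly obtain a.s.\ convergence of the full sequence; the comparison yields only $\liminf_{\delta}\delta v^\delta_\alpha(0,\omega)=-\overline H$ a.s., and a separate measure-theoretic lemma (\cite[Lemma~A.6]{AS}) is invoked to upgrade this to convergence in $L^1$ before the covering argument for \eqref{mainstepeq}.
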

\begin{proof}
Proposition~\ref{CDE} allows us to prove the claim for fixed $p \in \Rd$ and $\mu \geq 0$, and then to intersect relevant subsets of $\Omega$ for a countable dense subset of $(p,\mu)$ in $\Rd \times \R_+$. We therefore fix $p$ and $\mu$ and omit the dependence on these variables for ease of notation. 

For $\alpha =1,\ldots,\m$, define
\begin{equation}\label{defvhat}
\widehat v^\delta_\alpha(y,\omega) := v^\delta_\alpha(y,\omega) - v^\delta_1(0,\omega).
\end{equation}
The estimates \eqref{deltabnd}, \eqref{collapse} and \eqref{lipschitz} together with the stationarity of $v^\delta$ are sufficient for the extraction of a subsequence $\delta_j \to 0$ such that, for every $R> 0$, as $j \to \infty$,
\begin{equation*}
\left\{ \begin{aligned}
& -\delta_j v^{\delta_j}_\alpha \rightharpoonup \overline H \quad \mbox{weakly-}\!\ast \ \mbox{in} \ L^\infty(B_R\times \Omega), \\
& \widehat v^{\delta_j}_\alpha \rightharpoonup w_\alpha \quad \mbox{weakly-}\!\ast \ \mbox{in} \ L^\infty(B_R\times\Omega), \\
& D\widehat v^{\delta_j}_\alpha \rightharpoonup Dw_\alpha \quad \mbox{weakly-}\!\ast \ \mbox{in}  \ L^\infty(B_R\times\Omega),
\end{aligned} \right.
\end{equation*}
for a deterministic constant $\overline H = \overline H(p,\mu)$ and functions $w_\alpha(\cdot,\omega) \in C^{0,1}(\Rd)$. Standard arguments from the theory of viscosity solutions (using in particular the convex structure of \eqref{HJaux} and the equivalence of distributional and viscosity solutions for linear inequalities, c.f. Ishii~\cite{I}) yield that, for each $\alpha=1,\ldots,\m$, $w=w_\alpha$ is a solution, a.s. in $\omega$, of the system~\eqref{HJauxlim}. We emphasize that in deriving \eqref{HJauxlim} we rely crucially on the convexity of $H_\alpha(p,y,\omega)$ in $p$ and of $f_\alpha(z_1,\ldots,z_k,\mu,y,\omega)$ in the differences $z_\alpha-z_\beta$. According to \eqref{lipschitz}, the gradients $Dw_\alpha$  satisfy
\begin{equation}\label{walphlip}
\sup_{\alpha\in \indx} \esssup_{\Rd\times \Omega} \big| Dw_\alpha \big| \leq C,
\end{equation}
and they inherit the stationarity property from the sequence $v^{\delta_j}_\alpha$. From \eqref{EDv0} we deduce that
\begin{equation*}
\E\big[Dw_\alpha\big] = \lim_{j \to \infty} \E\big[D\widehat v^{\delta_j}_\alpha\big] = \lim_{j \to \infty} \E\big[D v^{\delta_j}_\alpha\big] = 0.
\end{equation*}
The ergodic theorem (c.f. Kozlov \cite{K} or \cite[Lemma A.5]{AS}) yields \eqref{sublininfty}.

Having identified $\overline H$, we must show that it characterizes the full limit of $-\delta v^\delta_\alpha$. The key step is to use the comparison principle to show that
\begin{equation} \label{cmpineq}
-\overline H \leq \liminf_{\delta \to 0} \delta v^\delta_\alpha(0,\omega) \quad  \mbox{and a.s. in} \ \omega.
\end{equation}
Denote by $\widetilde \Omega$ the subset of $\Omega$ of full probability consisting $\omega$ for which \eqref{sublininfty} and \eqref{HJauxlim} hold, as well as 
\begin{equation}\label{walphlipy}
\sup_{\Rd} |Dw_\alpha(\cdot,\omega)|\leq C,
\end{equation}
the latter condition holding on a subset of full probability by Fubini's theorem and \eqref{walphlip}.

Fix $\omega\in \widetilde \Omega$. Choose $\delta,\eta> 0$, let $\gamma > 0$ be a constant to be selected below, and define, for each $\alpha=1,\ldots,\m$,
\begin{equation*}
\widehat w^\delta_\alpha(y,\omega): = w_\alpha(y,\omega) - \left(\overline H+\eta\right)\! \delta^{-1} - \gamma(1+|y|^2)^{1/2}.
\end{equation*}
Due to \eqref{walphlipy} and the fact that $|D\widehat w^\delta_\alpha| \leq |Dw_\alpha| + C\gamma$, we see that $\widehat w^\delta=(\widehat w^\delta_1,\ldots,\widehat w^\delta_\m)$ is a solution of the system of inequalities
\begin{multline} \label{HJauxlimW}
\delta \widehat w^\delta_\alpha- \tr ( A_\alpha\!\left(y, \omega \right)\! D^2 \widehat w^\delta_\alpha ) + H_\alpha( p+ D\widehat w^\delta_\alpha,y,\omega) + f_\alpha(\widehat w^\delta_1,\ldots, \widehat w^\delta_\m,\mu,y,\omega)\\ \leq \delta w_\alpha - \eta + C\gamma \quad \mbox{in} \ \Rd\quad (\alpha=1,\ldots,\m).
\end{multline}
Fix a constant $r> 0$ to be selected below. Choosing $\gamma := \eta/(2C)$ and applying \eqref{sublininfty}, we may estimate the right side of \eqref{HJauxlimW} for $|y| \leq r$ by
\begin{equation} \label{rsubball}
\delta w_\alpha - \eta + C\gamma \leq \delta C_\eta + \delta\eta^3 r - \frac{1}{2}\eta. 
\end{equation}
Next we observe that by \eqref{deltabnd}, the definition of $\widehat w^\delta_\alpha$ and our choice of $\gamma$, we have
\begin{equation} \label{rsubbdry}
\widehat w^\delta_\alpha - v^\delta_\alpha \leq V_\alpha +C\delta^{-1} - c \eta r  \quad \mbox{on} \ \partial B_r.
\end{equation}
It follows from \eqref{rsubball} and \eqref{rsubbdry} that by selecting $r=C/\delta \eta$ for a sufficiently large constant $C>0$, we obtain, for sufficiently small $\delta > 0$,
\begin{equation*}
\delta w_\alpha(y) - \eta + C\gamma \leq 0 \quad \mbox{in} \ B_r \quad \mbox{and} \quad \widehat w^\delta_\alpha - v^\delta_\alpha \leq 0 \quad \mbox{on} \ \partial B_r.
\end{equation*}
The comparison principle (c.f. \cite[Theorem 4.7]{IK}) yields that, for each $\alpha=1,\ldots,\m$,
\begin{equation*}
\widehat w^\delta_\alpha \leq v^\delta_\alpha \quad \mbox{in} \ B_r,
\end{equation*}
and, in particular, $\widehat w^\delta_\alpha(0) \leq v^\delta_\alpha(0)$. Multiplying this last inequality by $\delta$ and sending $\delta \to 0$ gives
\begin{equation*}
- \overline H -C\eta  \leq \liminf_{\delta \to 0} \delta v^\delta_\alpha(0,\omega).
\end{equation*}
Disposing of $\eta > 0$ yields \eqref{cmpineq} for each $\omega\in \widetilde \Omega$.

Since $-\overline H$ is the $L^\infty(\Omega)$ weak-$\ast$ limit of $\delta_j v^{\delta_j}_\alpha(0,\omega)$, the reverse inequality of \eqref{cmpineq} holds and we obtain
\begin{equation} \label{cmpeq}
- \overline H = \liminf_{\delta \to 0} \delta v^\delta_\alpha(0,\omega) \quad \mbox{a.s. in} \  \omega.
\end{equation}
Now an elementary lemma from measure theory (c.f. \cite[Lemma A.6]{AS}) yields that
\begin{equation} \label{convprob}
\delta v^\delta_\alpha(0,\omega) \rightarrow -\overline H \quad \mbox{in probability and in} \ L^1(\Omega,\Prob).
\end{equation}
We now deduce~\eqref{mainstepeq} from a covering argument and the Lipschitz bound \eqref{lipschitz} (see the last step of proof of \cite[Proposition 5.1]{AS}). 
\end{proof}

We next collect some elementary properties of $\overline H$.

\begin{prop}\label{HAM}
The effective Hamiltonian $\overline H:\Rd\times \R_+ \to \R$ has the following properties: 
\begin{enumerate}
\item[(i)] for each $p\in \Rd$, the map $\mu \mapsto H(p,\mu)$ is strictly increasing;
\item[(ii)] for each $\mu \geq 0$, the map $p \mapsto H(p,\mu)$ is convex;
\item[(iii)] there are positive constants $c,C>0$, depending only on the assumptions, such that
\begin{equation} \label{Hbarcoer}
\lambda|p|^2 - C(1+|p|) \leq \overline H(p,\mu) \leq \Lambda |p|^2 + C(|p|+\mu).
\end{equation}
\end{enumerate}
\end{prop}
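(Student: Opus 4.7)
The plan is to transfer each property from the finite-$\delta$ approximations $v^\delta$ to their limit $\overline H$, using the characterization $\overline H(p,\mu) = -\liminf_{\delta\to 0} \delta v^\delta_\alpha(0,\omega;p,\mu)$ a.s.\ from \eqref{liminfH} (equivalently, the convergence in probability and $L^1(\Omega,\Prob)$ from \eqref{convprob}). Property (iii) is immediate: multiplying \eqref{deltabnd} by $-1$, evaluating at $y=0$, and sending $\delta \to 0$ gives
\[
\lambda|p|^2 - C(|p|+1) \leq \overline H(p,\mu) \leq \Lambda|p|^2 + C(|p|+\mu)
\]
for a.e.\ $\omega$, and since $\overline H$ is deterministic the inequalities hold pointwise.

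For (i), I would take $\delta \to 0$ in \eqref{CDEinmu}. Fix $p \in \Rd$ and $0 \leq \mu_1 < \mu_2$; evaluating \eqref{CDEinmu} at $y=0$ reads
\[
c(\mu_2-\mu_1) \leq \delta v^\delta_\alpha(0,\omega;p,\mu_1) - \delta v^\delta_\alpha(0,\omega;p,\mu_2) \leq C(\mu_2-\mu_1),
\]
and passing to the limit on an intersection of the two full-probability sets on which \eqref{liminfH} holds for $(p,\mu_1)$ and $(p,\mu_2)$ yields
\[
c(\mu_2-\mu_1) \leq \overline H(p,\mu_2) - \overline H(p,\mu_1) \leq C(\mu_2-\mu_1),
\]
which gives both strict monotonicity and Lipschitz continuity in $\mu$.

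The main work is (ii). Fix $\mu \geq 0$, $p_1,p_2 \in \Rd$, $\lambda \in [0,1]$, and set $\bar p := \lambda p_1 + (1-\lambda)p_2$. Writing $v^\delta_{\alpha,i} := v^\delta_\alpha(\cdot,\omega;p_i,\mu)$ for $i=1,2$, I would consider the convex combination
\[
\bar v^\delta_\alpha := \lambda v^\delta_{\alpha,1} + (1-\lambda) v^\delta_{\alpha,2}.
\]
Two structural facts are crucial. First, $H_\alpha(p,y,\omega) = A_\alpha p\cdot p + b_\alpha\cdot p$ is convex in $p$. Second,
\[
f_\alpha(z_1,\ldots,z_\m,\mu,y,\omega) = \sum_{\beta=1}^\m (\mu\sigma_{\alpha\beta} - c_{\alpha\beta})\exp(z_\alpha - z_\beta)
\]
is convex in $(z_1,\ldots,z_\m)$: the $\beta=\alpha$ term is a constant, while for $\beta\neq \alpha$ the coefficient $\mu\sigma_{\alpha\beta} - c_{\alpha\beta}$ is nonnegative by \eqref{diagdom} and \eqref{sigma}, and $(z_1,\ldots,z_\m)\mapsto \exp(z_\alpha - z_\beta)$ is convex. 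Taking a convex combination of the two PDEs satisfied by $v^\delta_{\alpha,1}$ and $v^\delta_{\alpha,2}$ therefore shows (in the viscosity sense, via smooth approximation or \cite[Lemma A.1]{AS} as used in Proposition~\ref{vdeltacmp}) that $\bar v^\delta = (\bar v^\delta_1,\ldots,\bar v^\delta_\m)$ is a bounded subsolution of \eqref{HJaux} with parameters $(\bar p,\mu)$. Proposition~\ref{vdeltacmp} then gives $\bar v^\delta_\alpha \leq v^\delta_\alpha(\cdot,\omega;\bar p,\mu)$; multiplying by $\delta$, evaluating at $y=0$, and passing to the limit as in (i) yields
\[
\lambda \overline H(p_1,\mu) + (1-\lambda)\overline H(p_2,\mu) \geq \overline H(\bar p,\mu).
\]
The only delicate step is the preservation of the viscosity subsolution property under convex combinations, which is routine given the convexity of the nonlinearities just established and the Lipschitz regularity \eqref{lipschitz}.
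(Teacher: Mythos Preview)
Your proof is correct and follows essentially the same approach as the paper: (iii) from \eqref{deltabnd}, (i) from \eqref{CDEinmu}, and (ii) by showing the convex combination of the $v^\delta_\alpha(\cdot;p_i,\mu)$ is a subsolution of \eqref{HJaux} at $\bar p$ via convexity of $H_\alpha$ and $f_\alpha$, then applying Proposition~\ref{vdeltacmp} and passing to the limit. The only cosmetic difference is that the paper works with the midpoint $q=\tfrac12(p_1+p_2)$ (midpoint convexity plus continuity suffices), whereas you use a general $\lambda\in[0,1]$.
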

\begin{proof}
It is immediate from \eqref{CDEinmu} and \eqref{mainstepeq} that, for all $p \in \Rd$ and $0 \leq \mu_1\leq \mu_2$,
\begin{equation*}
c(\mu_2-\mu_1) \leq \overline H(p,\mu_2) - H(p,\mu_1) \leq C(\mu_2-\mu_1)
\end{equation*} 
for $C,c > 0$ depending only on upper bounds for $|p|$ and $\mu_2$. This yields~(i).

To prove (ii), fix $p_1,p_2\in  \Rd$, $\mu \geq 0$, $\omega\in \Omega_0$, set $q := \frac12(p_1+p_2)$ and, for each $\delta > 0$,
\begin{equation} \label{wdeldef}
w^\delta_\alpha(y) : = \frac12v^{\delta}_\alpha(y,\omega;p_1,\mu) + \frac12 v^{\delta}_\alpha(y,\omega;p_2,\mu).
\end{equation}
The convexity of $f_\alpha$ in the differences $z_\alpha-z_\beta$ and the convexity of $H_\alpha$ in $p$ easily yield that $w^\delta$ satisfies
\begin{equation} \label{convderinq}
\delta w^\delta_\alpha - \tr ( A_\alpha\!\left(y, \omega \right)\! D^2 w^\delta_\alpha ) + H_\alpha( \theta+ Dw^\delta_\alpha,y,\omega) + f_\alpha\!\left( w_1^\delta, \ldots, w^\delta_\m, \mu, y, \omega\right)  \leq 0 \quad \mbox{in} \  \Rd.
\end{equation}
Proposition~\ref{vdeltacmp} implies that $w^\delta_\alpha \leq v^\delta_\alpha(y,\omega;q,\mu)$. Multiplying by $-\delta$ and passing to limits with \eqref{mainstepeq} in mind yields (ii).

The bounds \eqref{Hbarcoer} are immediate from \eqref{deltabnd} and \eqref{mainstepeq}.
\end{proof}

\begin{remark}
Notice that \eqref{Hbarcoer} implies that $\overline H(0,0) \leq 0$. It follows, then, from (i), (ii) and (iii), above, that the number $\overline \lambda\geq 0$ given in \eqref{lbar} is well-defined and 
\begin{equation*}
\min_{\Rd} \overline H\big(\cdot,\overline \lambda\big) = 0.
\end{equation*}
\end{remark}

\section{The homogenization of the Hamilton-Jacobi system} \label{SHJthm}

The $L^1$ convergence in the limit \eqref{mainstepeq} can be upgraded to almost sure convergence. That is, we claim that there exists an event $\Omega_2 \subseteq \Omega$ of full probability such that, for every $R> 0$ and $\omega\in \Omega_2$,
\begin{equation}\label{asconv}
\lim_{\delta \to 0}\sup_{y\in B_{R/\delta}} \left| \delta v^\delta_\alpha (y,\omega;p,\mu) + \overline H(p,\mu) \right| = 0.
\end{equation}
To prove this, the subadditive ergodic theorem must be applied to an appropriately chosen subadditive quantity.

Here we outline a proof of \eqref{asconv} which follows closely the ideas of~\cite{AS}. Due to the similarity to~\cite{AS}, we omit the details. In fact, the argument is much simpler here since the system is no more complicated than the scalar case and, unlike~\cite{AS}, we are in the context of a bounded environment. 

\begin{proof}[Sketch of the proof of \eqref{asconv}]
For fixed $p\in \Rd$ and $\mu \geq 0$, we consider what we call the \emph{metric problem}, which is the system of equations
\begin{equation}\label{HJmet}
 - \tr ( A_\alpha\!\left(y, \omega \right)\! D^2 m^\gamma_\alpha ) + H_\alpha( p+ Dm^\gamma_\alpha,y,\omega) + f_\alpha\!\left( m^\gamma_1, \ldots, m^\gamma_\m, \mu, y, \omega\right) = \gamma \quad \mbox{in} \ \Rd \setminus B(x,1),
\end{equation}
coupled with the conditions
\begin{equation}\label{HJmet-bc}
m^\gamma_\alpha(\cdot,x,\omega;p,\mu) = 0 \quad \mbox{on} \ \partial B(x,1) \qquad \mbox{and} \qquad \liminf_{|y|\to \infty} |y|^{-1} m^\gamma_\alpha(y,x,\omega;p,\mu) \geq 0. 
\end{equation}
Here $\gamma\in \R$ is a parameter, and it is possible to show that \eqref{HJmet}-\eqref{HJmet-bc} is well-posed, i.e., there exists a unique solution $m^\gamma_\alpha$ provided that $\gamma > \overline H(p,\mu)$. In fact, for such $\gamma$ there is a comparison principle for the system \eqref{HJmet} in exterior domains under very general growth conditions at infinity (and see Proposition 6.1 in \cite{AS}, which is easily generalized to the weakly coupled system). 

An argument very similar to the proof of~\eqref{collapse} gives the estimate
\begin{equation}\label{collapse-met}
\max_{\alpha,\beta\in \indx} \sup_{\Rd} \big| m^\gamma_\alpha(\cdot,x,\omega) - m^\gamma_\beta(\cdot,x,\omega) \big| \leq C.
\end{equation}
The comparison principle then implies that the $m^\gamma_\alpha$'s are increasing in $\gamma$ and jointly stationary in the sense that, for every $x,y,z\in \Rd$ and $\omega\in\Omega$,
\begin{equation}\label{mgamstat}
m^\gamma_\alpha(y,x,\tau_z\omega) = m^\gamma_\alpha(y+z,x+z,\omega),
\end{equation}
and that, up to a deterministic $C> 0$, the $m^\gamma_\alpha$'s are almost subadditive, i.e., for all $x,y,z\in \Rd$ and $\omega\in\Omega$,
\begin{equation}\label{mgamsl}
m^\gamma_\alpha(y,x,\omega) \leq m^\gamma_\alpha(z,x,\omega) + m^\gamma_\alpha(y,z,\omega) + C.
\end{equation}
The multiparameter subadditive ergodic theorem (c.f.~Akcoglu and Krengel~\cite{AK}) then yields that, almost surely in~$\omega$,
\begin{equation}\label{mbar}
\overline m^\gamma(y-x) = \lim_{t\to\infty} \frac1t m^\gamma_\alpha(ty,tx,\omega)
\end{equation}
for a deterministic function $\overline m^\gamma$ which, due to \eqref{collapse-met}, is independent of $\alpha$. In fact, we can select a single event $\Omega_2\subseteq\Omega$ of full probability on which the limit \eqref{mbar} holds for every $\omega\in \Omega_2$, $p\in \Rd$, $\mu\geq 0$, and $\gamma > \overline H(p,\mu)$. 

With the help of what we have obtained already in Proposition~\ref{mainstep}, we can characterize the limit function $\overline m^\gamma$. We take a subsequence of $\delta$'s along which the convergence in \eqref{mainstepeq} holds almost surely and argue with a reverse perturbed test function argument (introduced in~\cite[Proposition 6.9]{AS}) that 
\begin{equation}\label{limit-met}
\overline H(p+D\overline m^\gamma,\mu) = \gamma \quad \mbox{in} \ \Rd \setminus \{ 0 \}.
\end{equation}
Having identified an almost sure limit in terms of the effective Hamiltonian $\overline H$, we may conclude the proof of \eqref{asconv} for every $\omega\in \Omega_2$ by using a perturbed test function argument very similar to the one in the proof of \cite[Proposition 7.1]{AS} (or the one below).
\end{proof}

With \eqref{asconv} in hand, we present the proof of Theorem~\ref{HJthm}.

\begin{proof}[{Proof of Theorem~\ref{HJthm}}]
We argue only that $u$ is a subsolution of \eqref{HJeff} in $U$, the verfication that it is a supersolution following along similar lines. The proof is by the classical perturbed test function method of Evans~\cite{E}.

Assume that for some $\varphi \in C^\infty(U)$ and $x_0 \in U$,
\begin{equation}  \label{slm}
x\mapsto (u-\varphi)(x) \quad \mbox{has a strict local maximum at} \ x=x_1.
\end{equation}
We must show that 
\begin{equation} \label{wts}
\overline H(D\varphi(x_1),\mu) \leq g(x_1).
\end{equation}
Suppose on the contrary that
\begin{equation} \label{badeta}
\eta : = \overline H(D\varphi(x_1),\mu) - g(x_1) > 0.
\end{equation}
Fix $\omega \in \Omega_0$ for which $\mu_\ep(\omega)\rightarrow \mu$ and $u^\ep(\cdot,\omega) \rightarrow u$ uniformly in a neighborhood of $x_1$. Set $p = D\varphi(x_1)$ and define the perturbed function
\begin{equation*}
\varphi^\ep_\alpha(x): = \varphi(x) + \ep v^\ep_\alpha ( \tfrac x\ep, \omega;p,\mu_\ep),
\end{equation*}
where $v^\ep_\alpha$ is the solution of \eqref{HJaux} with $\delta=\ep$. We claim that, for sufficiently small $r,\ep > 0$,
\begin{equation*}
-\ep \tr\!\left( A_\alpha (\tfrac x\ep,\omega) D^2\varphi^\ep_\alpha \right) + H_\alpha(D\varphi^\ep_\alpha,\tfrac x\ep , \omega ) + f_\alpha \!\left( \frac{\varphi_1^\ep}{\ep}, \ldots, \frac{\varphi_k^\ep}{\ep}, \mu_\ep, \tfrac x\ep, \omega \right) \geq g(x) + \frac12\eta \quad \mbox{in} \ B(x_1,r).
\end{equation*}
Indeed, this follows from the continuity of $H_\alpha$, $f_\alpha$ and $g$, and \eqref{fatrans}, \eqref{HJaux}, \eqref{asconv} and \eqref{badeta}.
The maximum principle for the cooperative system then implies that 
\begin{equation*}
 \max_{\alpha\in\indx}  \max_{\partial B(x_1,r)} \left( u^\ep_\alpha- \varphi^\ep_\alpha \right) =  \max_{\alpha\in\indx} \max_{\overline B(x_1,r)}  \left( u^\ep_\alpha- \varphi^\ep_\alpha \right).
\end{equation*}
Using \eqref{CDEinmu} and \eqref{asconv}, we send $\ep \to0$ to deduce that 
\begin{equation*}
 \max_{\alpha\in\indx}  \max_{\partial B(x_1,r)} \left( u_\alpha- \varphi_\alpha \right) =  \max_{\alpha\in\indx} \max_{\overline B(x_1,r)}  \left( u_\alpha- \varphi_\alpha \right).
\end{equation*}
This contradicts \eqref{slm} for small enough $r> 0$. We have verified \eqref{wts}, which confirms that $u$ is a viscosity subsolution of \eqref{HJeff}. The proof that $u$ is also a supersolution is argued along similar lines.
\end{proof}

\section{Concentration phenomena} \label{proof}

The proof of Theorem~\ref{MAIN} is presented in several steps. First, in the next proposition, we use Egoroff's theorem, the ergodic theorem, and the monotonicity in \eqref{eig-mono} to show that the eigenvalues $\lambda^\ep(U,\omega)$ converge almost surely in $\omega$ to a deterministic limit $\lambda_0$, which is independent of the domain $U$. Comparing the eigenfunctions $\psi^\ep_\alpha$ and the approximate correctors $v^\delta_\alpha$ allows us to conclude that $\lambda_0 = \overline \lambda$, from which the concentration of the eigenfunctions follows easily if $\overline \theta$ can be defined unambiguously. 

\begin{prop} \label{eigslim}
There exist a subset $\Omega_0 \subseteq \Omega$ of full probability and a constant $\lambda_0 \in \R$ such that 
\begin{equation*}
\lim_{\ep \downarrow 0} \lambda^\ep(U,\omega) = \lambda_0 \quad \mbox{for every bounded domain} \ U \subseteq \Rd \ \mbox{and all} \ \omega \in \Omega_0.
\end{equation*}
\end{prop}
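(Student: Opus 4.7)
The plan is to exploit only the monotonicity properties of the principal eigenvalue together with stationarity and ergodicity; the Hamilton--Jacobi machinery developed earlier is not needed here. First, I would fix a convenient star-shaped reference domain, say $V = B_1$. The monotonicity \eqref{eig-mono} guarantees that $\ep \mapsto \lambda^\ep(B_1,\omega)$ is increasing, hence the pointwise limit
\begin{equation*}
\Lambda(\omega) := \lim_{\ep \downarrow 0} \lambda^\ep(B_1,\omega)
\end{equation*}
exists (and is finite, being bounded above by $\lambda^1(B_1,\omega) = \lambda_1(B_1,\omega)$) for every $\omega \in \Omega$. The function $\Lambda$ is measurable as a monotone limit of measurable functions.

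Next, I would show that $\Lambda$ is $(\tau_x)$-invariant. Stationarity of the coefficients combined with the translation $y\mapsto y+x$ in the eigenvalue problem yields $\lambda_1(B_r(x),\omega) = \lambda_1(B_r,\tau_x\omega)$. For $r > |x|$, the inclusions $B_{r-|x|} \subseteq B_r(x) \subseteq B_{r+|x|}$ together with the domain monotonicity \eqref{domain-mono} give
\begin{equation*}
\lambda_1(B_{r+|x|},\omega) \leq \lambda_1(B_r,\tau_x\omega) \leq \lambda_1(B_{r-|x|},\omega).
\end{equation*}
Using \eqref{micro-macro} and sending $r\to\infty$ (equivalently, $\ep = 1/r \to 0$), both extremes converge to $\Lambda(\omega)$, so $\Lambda(\tau_x\omega) = \Lambda(\omega)$ for every $x\in\Rd$. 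By ergodicity of the action, $\Lambda$ is a.s.\ equal to a deterministic constant $\lambda_0 \in [0,\infty)$.

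To remove dependence on the reference domain, I would observe that for any $t > 0$,
\begin{equation*}
\Lambda(B_t,\omega) = \lim_{\ep\downarrow 0} \lambda_1\!\left(B_{t/\ep},\omega\right) = \lim_{R\to\infty}\lambda_1(B_R,\omega) = \Lambda(B_1,\omega),
\end{equation*}
so $\Lambda(B_t,\omega)=\lambda_0$ a.s.\ for each fixed $t > 0$. Fixing a countable dense collection of radii and intersecting the corresponding full-probability sets, define $\Omega_0$ to be a single full-probability event on which $\lambda^\ep(B_t,\omega) \to \lambda_0$ for every positive rational $t$. Given any bounded domain $U \subseteq \Rd$ and $\omega\in\Omega_0$, pick rationals $0 < r_1 < r_2$ with $B_{r_1} \subseteq U \subseteq B_{r_2}$; then domain monotonicity yields the sandwich
\begin{equation*}
\lambda^\ep(B_{r_2},\omega) \leq \lambda^\ep(U,\omega) \leq \lambda^\ep(B_{r_1},\omega),
\end{equation*}
both extremes tending to $\lambda_0$, so $\lambda^\ep(U,\omega) \to \lambda_0$ as $\ep\downarrow 0$.

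There is no substantial obstacle here; this proposition is essentially a bookkeeping statement. The only point that requires some care is ensuring the exceptional null set does not depend on the (uncountable) choice of $U$, which is handled by exhausting by balls of rational radius and using domain monotonicity to transfer the convergence to arbitrary bounded $U$. The key input is the unusually strong monotonicity of $\ep\mapsto \lambda^\ep(V,\omega)$ for star-shaped $V$, which makes the entire argument elementary and bypasses any need for a subadditive ergodic theorem.
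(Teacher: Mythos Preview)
Your argument is correct and elegant \emph{provided the domain $U$ contains the origin}, but there is a genuine gap in the final sandwich step for general bounded domains. You write ``pick rationals $0 < r_1 < r_2$ with $B_{r_1} \subseteq U \subseteq B_{r_2}$,'' but if $0 \notin U$ no such $r_1$ exists, and the upper bound on $\lambda^\ep(U,\omega)$ is missing. The obvious repair---using a ball $B(x_0,r_1) \subseteq U$ centered at some $x_0 \neq 0$---does not work directly: by \eqref{micro-macro} and stationarity one has
\begin{equation*}
\lambda^\ep(B(x_0,r_1),\omega) = \lambda_1\big(B(x_0/\ep,\, r_1/\ep),\omega\big) = \lambda^{\ep/r_1}\big(B_1,\, \tau_{x_0/\ep}\omega\big),
\end{equation*}
and as $\ep \to 0$ the environment $\tau_{x_0/\ep}\omega$ is being shifted to infinity. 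Even though the set $\{\Lambda = \lambda_0\}$ is $\tau$-invariant (so each $\tau_{x_0/\ep}\omega$ lies in it), the monotone convergence $\lambda^\delta(B_1,\omega') \downarrow \lambda_0$ is only known pointwise in $\omega'$, not uniformly, and there is no reason the decreasing sequence evaluated along a moving family of environments should converge to $\lambda_0$. Intersecting over countably many rational centers does not help, since the center $x_0/\ep$ still escapes to infinity.

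This is precisely the difficulty the paper's proof is designed to overcome. The paper first uses Egoroff's theorem to find a set $E$ of probability at least $\tfrac12$ on which the convergence $\lambda^\ep(B_1,\cdot) \downarrow \lambda_0$ is \emph{uniform}, and then invokes the ergodic theorem (Proposition~\ref{ergthm}) to guarantee that for a.e.\ $\omega$ and any ball $V \subset\subset U$, the rescaled set $\ep\{y:\tau_y\omega \in E\}$ meets $V$ for all small $\ep$. Choosing such a point $y$ and using $B(y,\delta) \subseteq U$ then gives $\lambda^\ep(U,\omega) \leq \lambda^{\ep/\delta}(B_1,\tau_{y/\ep}\omega) \leq \lambda_0 + \eta$, where the last inequality uses the uniformity on $E$. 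Your lower bound and your identification of $\lambda_0$ via invariance are fine (and indeed match the paper's); it is only the upper bound for off-center domains that requires this extra Egoroff/ergodic step.
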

\begin{proof}
According to \eqref{eig-mono}, for each fixed $\omega\in \Omega$, the eigenvalue $\lambda^\ep(B_1,\omega)$ is increasing as a function of $\ep$. Therefore, for every $\omega \in \Omega$, there exists a number $\lambda_0(\omega) \in \R$ such that
\begin{equation} \label{lim-omegad}
\lambda^\ep(B_1,\omega) \downarrow \lambda_0(\omega) \quad \mbox{as} \ \ep \downarrow 0.
\end{equation}
We claim that, for each $\mu \in \R$, the event
\begin{equation*}
\Lambda_\mu : = \left\{ \omega \in \Omega : \lambda_0(\omega) \geq \mu \right\}
\end{equation*}
has probability $\Prob[\Lambda_\mu] \in \{ 0,1\}$. This follows from the ergodicity assumption once we show that $\tau_z(\Lambda_\mu) = \Lambda_\mu$ for every $z\in  \Rd$. Indeed, for $|z| \leq \ep^{-1}$, we have, by stationarity,
\begin{equation*}
\lambda^\ep (B_1,\tau_z \omega) = \lambda_1\!\!\left(B(z,\ep^{-1}),\omega\right) \geq \lambda_1\!\!\left( B(0,2\ep^{-1},\omega \right) = \lambda^{\ep/2}(B_1,\omega),
\end{equation*}
and similarly, for such $\ep$, we also have $\lambda^{\ep/2}(B_1,\tau_z\omega) \leq \lambda^\ep(B_1,\omega)$. Hence $\lambda_0(\tau_z\omega) = \lambda_0(\omega)$ for every $z\in  \Rd$. This implies $\tau_z(\Lambda_\mu) = \Lambda_\mu$ for every $z\in \Rd$. 

It is then immediate that \eqref{lim-omegad} may be improved to 
\begin{equation} \label{limaso}
\lambda^\ep(B_1,\omega) \downarrow \lambda_0 \quad \mbox{as} \ \ep \downarrow 0 \quad \mbox{for every} \ \omega \in \Omega_1,
\end{equation}
for some deterministic constant $\lambda_0$ and subset $\Omega_1 \subseteq \Omega$ of full probability.

Using Egoroff's theorem we find a subset $E\subseteq \Omega$ with probability $\Prob[E] \geq \frac12$ such that 
\begin{equation*}
\lambda^\ep(B_1,\omega) \downarrow \lambda_0 \quad \mbox{as} \ \ep \downarrow 0\quad\mbox{uniformly in} \ \omega\in E.
\end{equation*}
For each fixed $\omega\in\Omega$, define the set
\begin{equation*}
A_\omega : = \left\{ y \in  \Rd : \tau_y \omega \in E \right\}.
\end{equation*}
By the ergodic theorem, for each bounded domain $V \subseteq  \Rd$, there exists $\Omega_V \subseteq\Omega$ of full probability such that, for each $\omega \in \Omega_V$, 
\begin{equation*}
\lim_{\ep \to 0} \fint_V \mathds{1}_{A_\omega}\!\left( \frac x\ep \right) \, dx = \Prob[E] \geq \frac12.
\end{equation*}
Choose a countable basis $\mathcal B$ for the Euclidean topology on $ \Rd$ consisting of balls, let $\Omega_2:=\cap_{V\in \mathcal B} \Omega_V$ and define $\Omega_0 : = \Omega_1 \cap \Omega_2$.

Fix now a domain $U \subseteq  \Rd$, a small constant $\eta > 0$, and select an element $V\in \mathcal B$ with $\overline V \subseteq U$ and set $\delta : = \dist(V,\partial U)$. It follows that, for every $\omega \in \Omega_0$, there exists $T_0 = T_0(\omega) > 0$ sufficiently large so that, for all $\omega\in E$ and $0<\ep \leq T_0(\omega)^{-1}$,
\begin{equation*}
\lambda^\ep(B_1,\omega) - \lambda_0 \leq \eta \quad \mbox{and} \quad  \ep A_{\omega} \cap V \neq \emptyset.
\end{equation*}

Now fix $\omega \in \Omega_0$. Suppose that $0<\ep \leq \delta T_0(\omega)^{-1}$ and select $y\in \ep A_{\omega} \cap V$. Then $B(y,\delta) \subseteq U$ and from the stationary hypothesis as well as \eqref{domain-mono}, \eqref{micro-macro} and the above properties, we may deduce that
\begin{align*}
\lambda^\ep(U,\omega) & \leq \lambda^\ep(B(y,\delta),\omega) = \lambda^{\ep/\delta}(B_1,\tau_{\frac y\ep} \omega) \leq \lambda_0 + \eta. 
\end{align*}
It follows that
\begin{equation*}
\limsup_{\ep \downarrow 0} \lambda^\ep(U,\omega) \leq \lambda_0.
\end{equation*}
Owing to the fact that $U \subseteq B_R$ for some large $R> 0$, and that $\omega \in \Omega_0 \subseteq \Omega_1$, we use \eqref{domain-mono} and \eqref{eig-mono} to conclude that 
\begin{equation*}
\lambda^\ep(U,\omega) \geq \lambda^\ep(B_R,\omega) = \lambda^{\ep/R}(B_1,\omega) \geq \lambda_0. \qedhere
\end{equation*}
\end{proof}

Next we use Theorem~\ref{HJthm} to show that $\lambda_0$ equals $\overline\lambda$ defined in \eqref{lbar}. From this we conclude the concentration \eqref{concent} of the eigenfunctions  and complete the proof of our main theorem.

\begin{proof}[{Proof of Theorem~\ref{MAIN}}]
Let $\psi^\ep_\alpha$ denote the functions defined in \eqref{hopf-cole} for $\alpha=1,\ldots,\m$ and normalized according to $\psi^\ep_1(x_0,\omega) = 0$ for some fixed $x_0 \in U$. An argument very similar to the one in the proof of Lemma~\ref{vdelta} yields,  for each $V \subset\subset U$, the bound
\begin{equation*}
\sup_{\alpha,\beta \in\indx} \sup_{ V} | \psi^\ep_\alpha(\cdot,\omega) - \psi^\ep_\beta(\cdot,\omega) | \leq C\ep,
\end{equation*}
and then the local Lipschitz estimates
\begin{equation*}
\sup_{\alpha\in\indx} \sup_{V} |D\psi^\ep_\alpha(\cdot ,\omega)| \leq C,
\end{equation*}
for a  $C>0$ independent of $\ep$. Taking a subsequence, also denoted by $\ep$, we find $\psi\in C^{0,1}_{\mathrm{loc}}(U)$ such that, as $\ep \to 0$ and for every $\alpha = 1,\ldots,\m$,
\begin{equation}\label{}
\psi^\ep_\alpha \rightarrow \psi \quad \mbox{locally uniformly in} \ U.
\end{equation}
Now Theorem~\ref{HJthm} and Proposition~\ref{eigslim} imply that $\psi$ satisfies the equation
\begin{equation*}
\overline H(D\psi,\lambda_0) = 0 \quad \mbox{in} \ U.
\end{equation*}
It follows at once that $\lambda_0 \leq \overline \lambda$.

To obtain the reverse inequality, we select $(p,\mu)$ such that $\overline H(p,\mu) < 0$. Set $\delta > 0$ sufficiently small so that the event $\inf_{U/\delta} \delta v^\delta_\alpha(\cdot,\omega;p,\mu) > 0$ has probability at least $\frac12$. For  $\omega$ belonging to this event, and, if in addition $\lambda_\ep(U,\omega) \leq \mu$,  the map
\begin{equation*}
x\mapsto \min_{\alpha\in\indx} \left( \psi^\delta_\alpha(x,\omega) - v^{\delta}_\alpha(\delta x,\omega;p,\mu) \right)
\end{equation*}
cannot have a local minimum in $U$ according to the comparison principle. This is a contradiction, since $\psi^\ep_\alpha(x,\omega) \to +\infty$ as $x\to \partial U$ and $v^{\delta}_\alpha(\cdot,\omega;p,\mu)$ is bounded. We deduce that $\Prob[ \lambda_\delta(U,\omega) > \mu ] \geq \frac12$ for small $\delta > 0$. According to Proposition~\ref{eigslim} we have $\lambda_0 \geq \mu$, and hence $\lambda_0 \geq \overline \lambda$. Therefore $\lambda_0 = \overline \lambda$ and we obtain the limit \eqref{drift}.

Finally, in the case $\{ p : \overline H(p,\overline \lambda) = 0 \} = \{\overline \theta\}$, we obtain that $D\psi =\overline \theta$ almost everywhere in $U$. It follows that $\psi(x) =\overline \theta\cdot (x-x_0)$ for each $x\in U$, and hence the full sequence $\psi^\ep_\alpha$ converges to $\psi$. The concentration behavior \eqref{concent} then follows.
\end{proof}

\section{Strict convexity of $p\mapsto \overline H(p,\mu)$ in uniquely ergodic environments} \label{SC}

We prove Theorem~\ref{uesc}. Throughout this section, we assume that the action of $( \tau_y )_{y\in\Rd}$ on the environment $(\Omega,\mathcal F, \Prob)$ is uniquely ergodic (see Definition~\ref{uedef}).

\medskip

It is worth revisiting the proof of the convexity of $p\mapsto \overline H(p,\mu)$ (Proposition~\ref{HAM}(ii)) to see if there is extra information we discarded. The argument essentially comes down to the derivation of \eqref{convderinq}. There is no doubt that any strict convexity on the part of $\overline H$ must be inherited from the $H_\alpha$'s, which satisfy, for every $p_1,p_2\in \Rd$ with $q := \frac12 p_1 + \frac12 p_2$ and $(y,\omega) \in \Rd\times\Omega$,
\begin{align} 
\frac12 H_\alpha(p_1,y,\omega)  + \frac12 H_\alpha(p_2,y,\omega) - H(q,y,\omega) & = \frac14 (p_1-p_2) \cdot A(y,\omega) (p_1-p_2) \label{HalphSC} \\
& \geq \frac14 \lambda |p_1-p_2|^2.\nonumber
\end{align}
Using \eqref{HalphSC}, we observe that, with $w_\alpha^\delta$ defined as in \eqref{wdeldef}, we may improve \eqref{convderinq}  to 
\begin{multline} \label{convderinqimp}
\delta w^\delta_\alpha - \tr ( A_\alpha\!\left(y, \omega \right)\! D^2 w^\delta_\alpha ) + H_\alpha( q+ Dw^\delta_\alpha,y,\omega) + f_\alpha\!\left( w_1^\delta, \ldots, w^\delta_\m, \mu, y, \omega\right)  \\ \leq -\frac14\lambda\big| p_1 - p_2 + Dv^\delta_\alpha(y,\omega;p_1,\mu) - Dv^\delta_\alpha(y,\omega;p_2,\mu) \big|^2 =: -h_\alpha(y,\omega) \quad \mbox{in} \  \Rd.
\end{multline}
The hope is to use the term $-h_\alpha$ on the right side of \eqref{convderinqimp} to show that, for some $c> 0$, 
\begin{equation*}\label{}
w^\delta_\alpha - v^\delta_\alpha(y,\omega;\theta,\mu) \leq -c\delta^{-1}.
\end{equation*}
If $h_\alpha$ is bounded below by a positive constant, then the desired conclusion is immediate. However, for $p_1$ close to $p_2$, there is not a definite reason why this should be true. All we can say is that $h$ is stationary, nonnegative, bounded, and satisfies, by Jensen's inequality and \eqref{EDv0},
\begin{equation} \label{Ehpos}
\E h_\alpha(0,\cdot) \geq \frac14 \lambda |p_1-p_2|^2.
\end{equation}
However, what we actually need is something weaker than for $h_\alpha$ to be bounded below by a positive constant. As we will see, it is enough to rule out the presence of large ``bare spots." That is, we need to ensure that, for some $R>0$, the set on which $h_\alpha$ is greater than some positive constant takes a uniform proportion of each ball of radius $R$. This is precisely what the unique ergodicity hypothesis gives us, as we see in the following lemma.

\begin{lem} \label{ueappl}
Suppose that $g = g(y,\omega)$ is stationary, nonnegative, and does not vanish a.s. in $\omega$. Then there exist constants $\eta,\rho > 0$, depending on the distribution of $g(0,\cdot)$, and a subset $\Omega_1 \subseteq\Omega$ of full probability, such that, for every $\omega\in \Omega_1$, there exists $R> 0$ such that
\begin{equation} \label{UEder}
\inf_{z\in \Rd} \big| \{ y\in B(z,R) : g(y) \geq \eta \} \big| \geq \rho |B_R|.
\end{equation}
\end{lem}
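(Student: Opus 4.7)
The plan is to reduce the claim to a direct application of the unique ergodicity hypothesis applied to an indicator of a super-level set of $g$. The key point is that the uniformity in $z \in \Rd$ in Definition~\ref{uedef} is the exact ingredient needed for \eqref{UEder} to hold for a single $R$ against all base points $z$.

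First, since $g$ is nonnegative and does not vanish almost surely, we have $\Prob[g(0,\cdot) > 0] > 0$, and hence by continuity of the measure from below we may choose $\eta > 0$ small enough that
\[
2\rho := \Prob\bigl[g(0,\cdot) \geq \eta\bigr] > 0.
\]
Define the bounded $\mathcal F$-measurable random variable $f(\omega) := \mathds{1}_{\{g(0,\omega) \geq \eta\}}$, so $\E f = 2\rho$. By the stationarity of $g$ we have $g(y,\omega) = g(0,\tau_y\omega)$, and therefore
\[
f(\tau_y\omega) = \mathds{1}_{\{g(y,\omega)\geq \eta\}} \quad \text{for every } y\in\Rd \text{ and } \omega\in\Omega.
\]

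Next, invoke unique ergodicity (Definition~\ref{uedef}) for the random variable $f$: there exists $\Omega_1\subseteq\Omega$ of full probability such that, for every $\omega\in\Omega_1$,
\[
\lim_{R\to\infty} \sup_{z\in\Rd} \bigg| \fint_{B(z,R)} \mathds{1}_{\{g(y,\omega)\geq \eta\}}\, dy - 2\rho \bigg| = 0.
\]
Fix $\omega\in\Omega_1$ and choose $R = R(\omega) > 0$ large enough that the supremum above is bounded by $\rho$. Then for every $z\in\Rd$,
\[
\frac{\bigl|\{y\in B(z,R) : g(y,\omega)\geq \eta\}\bigr|}{|B_R|} = \fint_{B(z,R)} \mathds{1}_{\{g(y,\omega)\geq \eta\}}\, dy \geq \rho,
\]
which is exactly \eqref{UEder}.

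The only potentially delicate point is that $\Omega_1$ may be chosen independently of the base point $z$ over which the infimum in \eqref{UEder} is taken; this is precisely the strengthening of Proposition~\ref{ergthm} encoded in unique ergodicity. Without it, an individual realization could contain arbitrarily large "bare regions" on which $g$ is below~$\eta$, and the conclusion would fail. Measurability of the relevant sets in $y$ follows from the joint measurability of $g$ and Fubini's theorem, so no further technical issues arise.
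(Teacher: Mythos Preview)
Your proof is correct and in fact more direct than the paper's. You apply unique ergodicity immediately to the indicator $f(\omega) := \mathds{1}_{\{g(0,\omega)\geq\eta\}}$, so that the averages $\fint_{B(z,R)} f(\tau_y\omega)\,dy$ are already the quantities appearing in \eqref{UEder}; the uniform convergence to $\E f = 2\rho$ then gives the conclusion in one step. The paper instead introduces the auxiliary event
\[
E := \Big\{ \omega : \big|\{y\in B(0,1): g(y,\omega)\geq\eta\}\big| \geq \rho|B_1| \Big\},
\]
applies unique ergodicity to $\mathds{1}_E$, and then uses a Vitali covering argument to pass from a lower bound on the density of $\{y: \tau_y\omega\in E\}$ in $B(z,R)$ to a lower bound on $|\{y\in B(z,2R): g(y,\omega)\geq\eta\}|$. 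Your route avoids this extra layer entirely. The paper's detour through $E$ would be relevant if one wanted to avoid relying on the pointwise value $g(0,\omega)$ (working instead only with local averages of $g$), but under the stated hypotheses your argument is both valid and cleaner.
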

\begin{proof}
Let
\begin{equation*}
E: = \Big\{ \omega \in \Omega \,:\, \big| \{ y\in B(0,1) : g(y,\omega) \geq \eta \}\big| \geq \rho |B_1|  \Big\},
\end{equation*}
with $\eta,\rho >0$ chosen small enough so that $\Prob[E] > 0$. According to \eqref{uecond}, there exists a subset $\widetilde \Omega \subseteq \Omega$ of full probability such that, for every $\omega \in \widetilde \Omega$, there exists $R> 1$ sufficiently large that 
\begin{equation*}
\inf_{z\in \Rd} \ \fint_{B(z,R)} \mathds{1}_{E} (\tau_y\omega) \, dy \geq \frac12 \Prob[E] > 0.
\end{equation*}
That is, for each $\omega \in \widetilde \Omega$, there exists $R>1$, depending on $\omega$, such that
\begin{equation} \label{bigDz}
\big| \{ y \in B(z,R) : \tau_y\omega \in E \} \big| \geq c_1 |B_R|,
\end{equation}
with $c_1:= \frac12 \Prob[E]>0$. Freeze $\omega\in \widetilde \Omega$ for the remainder of the argument, let 
\begin{equation}\label{}
D(z): =  \{ y \in B(z,R) : \tau_y\omega \in E \}
\end{equation}
and observe that the stationarity of $g$ yields
\begin{equation*}
D(z) = \Big\{ y\in B(z,R): \big| \{ x\in B(y,1) : g(x,\omega) \geq \eta \}\big| \geq \rho  \Big\}.
\end{equation*}
According to the Vitali covering lemma, there exist $y_1,\ldots,y_\ell \in D(z)$ such that the balls $\{ B(y_i,1) \}_{i=1}^\ell$ are disjoint and
\begin{equation} \label{covDz}
D(z) \subseteq \bigcup_{i=1}^\ell B(y_i,3).
\end{equation}
Since the balls $\{ B(y_i,1) \}_{i=1}^\ell$ are disjoint and $\tau_{y_i} \omega \in E$, we have, for any $z\in \Rd$,
\begin{equation*}
\Big| \big\{ y \in B(z,2R) : g(y,\omega) \geq \eta \big\} \Big| \geq \Big| \bigcup_{i=1}^\ell \{ y \in B(y_i,1) : g(y,\omega) \geq \eta \} \Big|  \geq \rho \ell |B_1|.
\end{equation*}
It follows from \eqref{bigDz} and \eqref{covDz} that $\ell |B_1| \geq c |D(r)| \geq c c_1 |B_R| \geq c|B_{2R}|$ and hence  \eqref{UEder}.
\end{proof}

In a uniquely ergodic environment, we can prove that many limits derived from the ergodic theorem are uniform. Another example is the following useful lemma, which generalizes the existence of ``approximate correctors" used by Ishii~\cite{I2} to prove homogenization of Hamilton-Jacobi equations in an almost periodic environment (the proof we give below also works in the non-viscous setting for first-order Hamilton-Jacobi equations). 

\begin{lem}\label{unifconv}
Under the uniquely ergodic assumption, the convergence \eqref{asconv} can be improved to
\begin{equation}\label{uniqunf}
\delta v^\delta_\alpha(y,\omega;p,\mu) \rightarrow \overline H(p,\mu) \quad \mbox{uniformly in} \ \Rd \ \mbox{and a.s. in} \ \omega.
\end{equation}
\end{lem}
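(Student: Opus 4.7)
Set $F^\delta(\omega) := \delta v^\delta_\alpha(0,\omega;p,\mu) + \overline H(p,\mu)$, so that by stationarity $F^\delta(\tau_y\omega) = \delta v^\delta_\alpha(y,\omega;p,\mu) + \overline H(p,\mu)$; the conclusion \eqref{uniqunf} is thereby equivalent to $\sup_{y \in \Rd} |F^\delta(\tau_y\omega)| \to 0$ almost surely as $\delta \to 0$.

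My approach rests on three inputs. First, Proposition~\ref{HJauxLip} yields the uniform Lipschitz bound
\[ |F^\delta(\tau_y\omega) - F^\delta(\tau_z\omega)| \leq C\delta|y-z|. \]
Second, the $L^1(\Omega,\Prob)$ convergence \eqref{convprob} from the proof of Proposition~\ref{mainstep} gives $\E|F^\delta| \to 0$ as $\delta \to 0$. Third, the uniform ergodic theorem of Definition~\ref{uedef}, applied to the bounded stationary function $|F^\delta|$, provides, a.s.\ in $\omega$,
\[ \sup_{z \in \Rd} \left| \fint_{B(z,R)} |F^\delta(\tau_y \omega)|\,dy - \E|F^\delta| \right| \longrightarrow 0 \quad \text{as } R \to \infty.\]

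The key preliminary remark is that $U^\delta(\omega) := \sup_{y} |F^\delta(\tau_y \omega)|$ is $\tau$-invariant and therefore equal a.s.\ to a deterministic constant $c_\delta$ by ergodicity; the task reduces to proving $c_\delta \to 0$. Suppose for contradiction $c_\delta \geq \eta > 0$ along some subsequence $\delta_n \downarrow 0$. For each such $\delta$ and a.e.\ $\omega$ there exists $y_0 = y_0(\delta, \omega)$ with $|F^\delta(\tau_{y_0}\omega)| \geq \eta/2$, and the Lipschitz bound then forces $|F^\delta(\tau_y\omega)| \geq \eta/4$ throughout $B(y_0, R_\delta)$, where $R_\delta := \eta/(4C\delta)$. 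Therefore
\[\fint_{B(y_0, R_\delta)} |F^\delta(\tau_y\omega)|\,dy \geq \eta/4,\]
which, combined with the uniform ergodic theorem, would force $\E|F^\delta| + o(1) \geq \eta/4$ and contradict $\E|F^\delta| \to 0$, provided $R_\delta$ is large enough for the ergodic averages over $B(y_0, R_\delta)$ to have sufficiently mixed.

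The main obstacle lies precisely in this matching of scales: unique ergodicity supplies a finite ``mixing radius'' $R^\ast(\omega, \delta)$ for each $\delta$ individually, with no a priori uniform bound across $\delta$, whereas the scheme requires $R^\ast(\omega, \delta) \leq R_\delta = O(1/\delta)$. To handle it, I would restrict $\delta$ to a countable sequence $\delta_n \downarrow 0$, intersect the corresponding full-probability sets from Definition~\ref{uedef}, and exploit both the uniform $L^\infty$ bound on the family $|F^\delta|$ and the vanishing of $\E|F^\delta|$ to control the convergence rate. If this direct route is insufficient, an alternative is to route the argument through the metric problem of Section~\ref{SHJthm}: its subadditive structure (see \eqref{mgamsl}) yields a uniform-in-$\omega$ limit under the unique ergodicity hypothesis via a uniform subadditive ergodic theorem, which in turn sandwiches $\delta v^\delta_\alpha$ from above and below to deliver \eqref{uniqunf}.
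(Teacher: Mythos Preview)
Your ingredients are the same as the paper's---the Lipschitz bound \eqref{lipschitz}, convergence of $F^\delta$ at the origin, and unique ergodicity---and the overall shape of the argument is right. You have also correctly located the genuine difficulty: if you apply the uniform ergodic theorem to $|F^\delta|$ for each $\delta$ separately, the resulting mixing radius $R^\ast(\omega,\delta)$ carries no uniformity in $\delta$, and your contradiction needs $R^\ast(\omega,\delta) \le R_\delta = O(1/\delta)$, which nothing guarantees. Your proposed fix (a), intersecting over countably many $\delta_n$ and invoking the uniform $L^\infty$ bound, does not close this gap: even along a fixed sequence you still have a $\delta_n$-dependent $R^\ast$ with no control on its growth, and the $L^\infty$ bound on $|F^\delta|$ gives no information about how fast the spatial averages settle. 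The alternative route through the metric problem and a uniform subadditive ergodic theorem would presumably work but is a substantial detour.

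The paper sidesteps the scaling issue with a simple trick: it applies unique ergodicity to a \emph{single} function that already encodes smallness uniformly in small $\delta$. Using the almost-sure convergence $F^\delta(\omega)\to 0$ from \eqref{asconv}, one picks $\delta_0>0$ so that the event
\[
A := \Big\{ \omega \,:\, \sup_{0<\delta<\delta_0} |F^\delta(\omega)| \le \ep \Big\}
\]
has positive probability. Unique ergodicity applied to $\mathds{1}_A$ then yields a single $R=R(\omega,\ep)$ such that every ball $B(z,R)$ meets $\{y:\tau_y\omega\in A\}$; in particular $\inf_{B(z,R)}|F^\delta(\tau_{\cdot}\omega)|\le \ep$ holds for \emph{every} $\delta<\delta_0$. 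The Lipschitz bound then gives $|F^\delta(\tau_z\omega)|\le \ep + C\delta R \le 2\ep$ once $\delta < \ep/(CR)$. The point is that $R$ depends on $\ep$ and $\omega$ but not on $\delta$, so the matching-of-scales problem never arises.
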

\begin{proof}
Let $\ep > 0$. The unique ergodicity assumption and \eqref{mainstepeq} yield some $\delta_1> 0$ small and $R > 1$ large such that, for all $0< \delta < \delta_0$, 
\begin{equation*}
\sup_{\alpha\in \indx} \sup_{y\in \Rd} \inf_{B(y,R)} \left| \delta v^\delta_\alpha(y,\omega;p,\mu) + \overline H(p,\mu) \right| \leq \ep. 
\end{equation*}
But then the Lipschitz bound \eqref{lipschitz} gives, for every $0 < \delta < \delta_0\min\{1, (CR)^{-1}\}$,
\begin{equation}\label{}
\sup_{\alpha\in \indx} \sup_{y\in \Rd}\left| \delta v^\delta_\alpha(y,\omega;p,\mu) + \overline H(p,\mu) \right| \leq \ep + \delta CR \leq 2\ep. \qedhere
\end{equation}
\end{proof}

Next we discuss a so-called ``growth lemma," an important analytic tool in the proof of Theorem~\ref{uesc}. It is a quantitative strong maximum principle which measures how the negative term $-h_\alpha$ on the right side of \eqref{convderinqimp} forces $w^\delta_\alpha$ to be lower in comparison to $v^\delta_\alpha(\cdot,\omega;\theta,\mu)$. We do not give the proof here, since going into details would take us very far off course. However, the proof is nearly the same as the proof of the classical growth lemma (c.f. Theorem 2 on page 118 of Krylov~\cite{Kry}), which follows from the ABP inequality. For this purpose we need the following ABP inequality for weakly coupled elliptic systems proved by Busca and Sirakov~\cite{BS}. 

\begin{lem} \label{growthlem}
Fix $\mu \geq 0$, $p\in \Rd$, $\omega \in \Omega$ and suppose that $\sigma,\tau \in \R$ and $u_\alpha$ and $v_\alpha$ satisfy
\begin{equation}
- \tr ( A_\alpha D^2 u_\alpha ) + H_\alpha( p+ Du_\alpha,y,\omega) + f_\alpha(u_1,\ldots,u_\m,\mu,y,\omega) \leq  \tau + \sigma - h_\alpha(y,\omega) \quad \mbox{in} \ B_{2R}
\end{equation}
and
\begin{equation}
- \tr ( A_\alpha D^2 v_\alpha ) + H_\alpha( p+ Dv_\alpha,y,\omega) + f_\alpha(v_1,\ldots,v_\m,\mu,y,\omega) \geq  \tau  \quad \mbox{in} \ B_{2R},
\end{equation}
where for some $\eta, \rho > 0$,
\begin{equation}
\min_{\alpha\in \indx} \big| \{ x\in B_R : h_\alpha(x,\omega) \geq \eta \} \big| \geq \rho.
\end{equation}
Then there exist constants $\kappa, \sigma_0 > 0$, depending on the constants in the assumptions as well as $R$, $\eta$ and $\rho$, such that $\sigma \leq \sigma_0$ implies that
\begin{equation*}
\min_{\alpha\in\indx} \inf_{B_{R}} (v_\alpha-u_\alpha) \geq \kappa + \min_{\alpha\in\indx} \inf_{B_{2R}} (v_\alpha-u_\alpha).
\end{equation*}
\end{lem}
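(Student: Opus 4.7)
The plan is to adapt the classical Krylov growth lemma \cite{Kry} to the cooperative system setting, using the Busca--Sirakov \cite{BS} ABP estimate for weakly coupled cooperative elliptic systems as the principal analytic tool. The idea is to pass from the nonlinear inequalities for $u$ and $v$ to a linear cooperative inequality for the difference $w := v - u$, and then exploit the fact that $h_\alpha$ is uniformly bounded below on a set of positive measure.

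I would begin by normalizing: let $m_0 := \min_{\alpha\in \indx} \inf_{B_{2R}}(v_\alpha - u_\alpha)$ and replace each $v_\alpha$ by $v_\alpha - m_0$. Since $m_0$ is a constant subtracted uniformly from all components, the invariance \eqref{fatrans} of $f_\alpha$ preserves the supersolution status, and after the shift $w_\alpha := v_\alpha - u_\alpha \geq 0$ throughout $B_{2R}$; it then suffices to establish $\min_\alpha \inf_{B_R} w_\alpha \geq \kappa$ for some positive $\kappa$ with the claimed dependencies. Next, I would subtract the differential inequalities and linearize: convexity of $H_\alpha$ in $p$ gives (in the viscosity sense) a linear gradient term $b_\alpha^\ast(y)\cdot Dw_\alpha$ with bounded coefficient $b_\alpha^\ast = D_pH_\alpha(p+Du_\alpha, y, \omega)$, while the mean value theorem applied to $f_\alpha$ yields a cooperative zeroth-order coupling $\sum_\beta d_{\alpha\beta}(y)w_\beta$ with $d_{\alpha\beta}\leq 0$ for $\alpha\neq \beta$ — this last property is a direct consequence of the explicit formula \eqref{Hafa} for $f_\alpha$ together with \eqref{diagdom} and \eqref{sigma}, which make $\partial f_\alpha/\partial z_\beta \leq 0$ for $\beta \neq \alpha$. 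The outcome is the cooperative linear inequality
\begin{equation*}
-\tr(A_\alpha D^2 w_\alpha) + b_\alpha^\ast \cdot Dw_\alpha + \sum_\beta d_{\alpha\beta} w_\beta \geq h_\alpha - \sigma \quad \mbox{in} \ B_{2R}.
\end{equation*}

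Finally, I would invoke a Krylov-style growth estimate for this cooperative linear system. On the set $E_\alpha := \{y\in B_R : h_\alpha(y,\omega) \geq \eta\}$, of measure at least $\rho$, the right-hand side satisfies $h_\alpha - \sigma \geq \eta/2$ once $\sigma \leq \sigma_0 := \eta/2$. Mimicking the scalar barrier construction, I would build a subsolution $\phi = (\phi_1,\ldots,\phi_\m)$ of the linearized cooperative system on $B_{2R}$ with zero boundary values and with the positive forcing $(\eta/2)\mathds{1}_{E_\alpha}$; the Busca--Sirakov ABP estimate, combined with the cooperative comparison principle, then yields a quantitative positive lower bound $\phi_\alpha \geq \kappa$ on $B_R$ depending only on $R$, $\eta$, $\rho$ and the ellipticity constants. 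Comparison with $w_\alpha$ on $B_{2R}$ gives $w_\alpha \geq \phi_\alpha$ there, hence $w_\alpha \geq \kappa$ on $B_R$, completing the argument. The principal obstacle is making the linearization rigorous at the viscosity level: one must justify the tangent inequality without pointwise derivatives for $u_\alpha$ and $v_\alpha$, which requires either local Lipschitz control (obtainable by a Bernstein-type estimate as in Proposition~\ref{HJauxLip}) or a direct viscosity-theoretic reformulation in terms of Pucci extremal operators; a secondary difficulty is adapting Krylov's scalar barrier to the coupled system in a manner that preserves both the cooperative sign structure and the quantitative pointwise lower bound derived from the $L^n$-based ABP inequality.
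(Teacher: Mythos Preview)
Your proposal is correct and follows essentially the same approach the paper indicates: the authors do not actually prove Lemma~\ref{growthlem}, but merely state that ``the proof is nearly the same as the proof of the classical growth lemma (c.f.\ Theorem~2 on page~118 of Krylov~\cite{Kry}), which follows from the ABP inequality,'' invoking the Busca--Sirakov~\cite{BS} ABP estimate for cooperative systems. Your outline---normalize, linearize the difference into a cooperative linear inequality, then apply a Krylov-type barrier argument driven by the system ABP estimate---is precisely this strategy made explicit, and your identification of the cooperative sign $\partial f_\alpha/\partial z_\beta \leq 0$ from \eqref{Hafa}, \eqref{diagdom}, \eqref{sigma} is correct.

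One minor remark: the viscosity-level linearization difficulty you flag is not an obstacle in the paper's actual application (the proof of Theorem~\ref{uesc}), since there the functions $v^\delta_\alpha$ and $w^\delta_\alpha$ are classical $C^2$ solutions by elliptic regularity (cf.\ Proposition~\ref{HJauxLip}), so the linearization via the mean value theorem is pointwise and rigorous without further work.
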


We combine the preceding Lemmata into a proof of Theorem~\ref{uesc}.

\begin{proof}[{Proof of Theorem~\ref{uesc}}]
We select $\mu \geq 0$, $p_1 \neq p_2$ and set $q: = \frac12(p_1+p_2)$. We argue by contradiction under the false assumption that $\overline H(p_1,\mu) + H(p_2,\mu) = 2 H(q,\mu)$, proceeding by way of a comparison between the functions $w^\delta_\alpha$ defined in \eqref{wdeldef} and the solutions $v^\delta_\alpha(\cdot,\cdot;q,\mu)$ of \eqref{HJaux} with $p=q$.

Fix $\ep > 0$ very small and $R > 0$ very large. According to Lemma~\ref{unifconv}, we may choose $\delta > 0$ sufficiently small to ensure that, for each $\alpha=1,\ldots,\m$,
\begin{equation}\label{}
\delta v^\delta_\alpha (y,\omega;q,\mu) \leq - \overline H(q,\mu)  + \ep  \quad \mbox{in} \ \Rd
\end{equation}
as well as
\begin{equation}\label{}
\delta w^\delta_\alpha(\cdot,\omega) \geq -\overline H(q,\mu) - \ep \quad \mbox{in} \ \Rd.
\end{equation}
Therefore, we have 
\begin{equation}\label{}
- \tr \big( A_\alpha D^2v^\delta_\alpha \big) + H_\alpha\big( q+ Dv^\delta_\alpha,y,\omega\big) + f_\alpha \big(v^\delta_1,\ldots,v^\delta_\m,\mu,y,\omega\big) \geq \overline H(p,\mu) - \ep  
\end{equation}
and, for $h_\alpha$ defined in \eqref{convderinqimp}, 
\begin{equation}\label{}
- \tr \big( A_\alpha D^2v^\delta_\alpha \big) + H_\alpha\big( q+ Dv^\delta_\alpha,y,\omega\big) + f_\alpha \big(v^\delta_1,\ldots,v^\delta_\m,\mu,y,\omega\big) \leq \overline H(p,\mu) + \ep -h_\alpha(y,\omega).
\end{equation}
According to Lemma~\ref{ueappl} and the growth lemma, if we choose $\ep > 0$ sufficiently small, then 
\begin{equation}\label{defMbt}
M(y):= \min_{\alpha\in\indx} \big( v^\delta_\alpha(y) - w^\delta_\alpha(y) \big)
\end{equation}
satisfies, for some $\kappa, R > 0$, 
\begin{equation}\label{caught}
M(y) \geq \kappa + \inf_{z\in B(y,R)} M(z).
\end{equation}
Such a function cannot be bounded. Indeed, if $M$ were bounded, then, for any $\beta> 0$, the function $z\mapsto M(z) + \beta |z|$ would achieve its global minimum at some point $y \in \Rd$. But then we would have 
\begin{equation*}\label{}
M(y) \leq M(z) + \beta (|z|-|y|) \quad \mbox{for all} \ z \in \Rd,
\end{equation*}
which is incompatible with \eqref{caught} if we take $\beta < \kappa / R$. We conclude that $M$ is unbounded. However, in light of its definition \eqref{defMbt}, the unboundedness of $M$ contradicts the boundedness of $v^\delta_\alpha$ and $w^\delta_\alpha$. This completes the proof.
\end{proof}

\subsection*{Acknowledgements}
The first author was partially supported by NSF Grant DMS-1004645, and the second author by NSF Grant DMS-0901802. We thank Amie Wilkinson and Albert Fathi for helpful comments.

\small

\bibliographystyle{plain}
\bibliography{neutrons}

\end{document}